\newtheorem{theorem}{Theorem}[section]
\newtheorem{claim}[theorem]{Claim}
\newtheorem{lemma}[theorem]{Lemma}
\newtheorem{proposition}[theorem]{Proposition}
\newtheorem{corollary}[theorem]{Corollary}
\theoremstyle{definition}
\newtheorem{definition}[theorem]{Definition}
\newtheorem{question}[theorem]{Question}
\theoremstyle{remark}
\newtheorem{remark}[theorem]{Remark}
\newtheorem{fact}[theorem]{Fact}
\def\l{{\langle}}
\def\r{{\rangle}}
\def\mathunderaccent#1#2 {\let\theaccent#1\skewfactor#2
\mathpalette\putaccentunder}
\def\putaccentunder#1#2{\oalign{$#1#2$\crcr\hidewidth
\vbox to.2ex{\hbox{$#1\skew\skewfactor\theaccent{}$}\vss}\hidewidth}}
\def\smallbox#1{\leavevmode\thinspace\hbox{\vrule\vtop{\vbox
   {\hrule\kern1pt\hbox{\vphantom{\tt/}\thinspace{\tt#1}\thinspace}}
   \kern1pt\hrule}\vrule}\thinspace}
\DeclareMathOperator{\cof}{cof}
\DeclareMathOperator{\spec}{Sp}
\DeclareMathOperator{\dom}{dom}
\DeclareMathOperator{\supp}{Supp}
\DeclareMathOperator{\rng}{rng}
\newcommand{\cf}{{\rm cf}}
\newcommand{\s}{\subseteq}
\title{Isomorphism classes of generating sets}
\author[Benhamou]{Tom Benhamou}
\address[Benhamou]{Department of Mathematics, Rutgers University, New Brunswick, NJ 08854, USA}
\thanks{The first author was supported by the NSF grant  DMS-2346680}
\email{tom.benhamou@rutgers.edu}
\author[Cummings]{James Cummings}
\address[Cummings]{Department of Mathematical Sciences, Carnegie Mellon University,
  Pittsburgh, PA 15213, USA}
  \email{jcumming@legba.math.cmu.edu}
\thanks{The second author was partially supported by NSF
grant DMS–2054532.}
\author[Goldberg]{Gabriel Goldberg}
\address[Goldberg]{Department of Mathematics, UC Berkeley, CA 94720, USA}
\email{ggoldberg@berkeley.edu}
\thanks{The third author was partially supported by NSF Foundations Grant 2401789. }
\author[Hayut]{Yair Hayut}
\address[Hayut]{Einstein Institute of Mathematics, Hebrew University of Jerusalem, Givat-Ram, 91904, Israel.}
\email{yair.hayut@mail.huji.ac.il}
\thanks{The forth author was partially supported by the Israel Science Fund, ISF 1967/21.}
\author[Poveda]{Alejandro Poveda}
\address[Poveda]{ Department of Mathematics and Center of Mathematical Sciences and Applications, Harvard University, Cambridge MA, 02138, USA}
\email{alejandro@cmsa.fas.harvard.edu}
\urladdr{www.alejandropovedaruzafa.com}
\thanks{The fifth author was supported by the the Center of Mathematical Sciences and Applications and the Department of Mathematics at Harvard University.}
\thanks{The results of this paper were conceived at the American Institute of Mathematics in Pasadena as part of a SQuaRE program. We thank the Institute for their support and for providing an excellent working environment.}
\subjclass[2010]{03E04, 06A07, 54D80, 03E55}
\keywords{Simple p-point, ultrafilter number, Tukey order, supercompact cardinals, non-linear iterated forcing}
\begin{document}

\begin{abstract}
    We introduce a new class of ultrafilters which generalizes the well-known class of simple $P$-point ultrafilters. We prove that for any well-founded $\sigma$-directed partial order $\mathbb{D}$ there is a mild forcing extension where there is an ultrafilter $U$ on $\omega$ with a base $\mathcal{B}$ such that $(\mathcal{B},\supseteq^*)\cong \mathbb{D}$. 
    On a measurable cardinal we prove a similar result: relative to a supercompact cardinal, it is consistent that $\kappa$ is supercompact, and for a $\kappa^+$-directed well-founded poset $\mathbb{D}$, there is a ${<}\kappa$-directed closed $\kappa^+$-cc forcing extension where there is a \emph{normal} ultrafilter $U$ on $\kappa$ with a base $\mathcal{B}$ such that $(\mathcal{B},\supseteq^*)\cong \mathbb{D}$. These are optimal results in the class of $P$-points and realize every potential structure of a $P$-point. We apply our constructions to obtain ultrafilters with controlled Tukey-type, in particular, an ultrafilter with non-convex Tukey and depth spectra is presented, answering questions from \cite{TomCohesive}. Our construction also provides new models where $\mathfrak{u}_\kappa<2^\kappa$, answering questions from \cite{TomGabe2025}.
\end{abstract}
\maketitle
\section{Introduction}

Ultrafilters with strong combinatorial properties have proven useful in constructing mathematical objects exhibiting extreme behavior, such as ultraproducts, topological spaces (including special points within them), and certain groups. They can also be employed to establish purely combinatorial results \cite{KomjathBook,GoldbringUltrafilters,SetTheoreticTopology}. Many of these  results are obtained by constructing a special generating set for the ultrafilter. This paper aims to study the possible structures of such generating sets. More precisely, we are interested in the isomorphism class of the structure $(\mathcal{B},\supseteq^*)$, where $\supseteq^*$ is the reversed inclusion modulo the bounded ideal (or finite in case of $\omega$) and $\mathcal{B}$ is a $\subseteq^*$-generating set of a filter $F$, namely, $\mathcal{B}\subseteq F$ and for every $A\in F$, there is $B\in \mathcal{B}$ such that $B\subseteq^* A$.

For example, Kunen \cite{kunen} used an iteration of Mathias forcing relative to an ultrafilter from \cite{MathiasHappyFamily} to construct ultrafilters on $\omega$ with generating sets which are $\subseteq^*$-decreasing of any desired ordinal length of uncountable cofinality $\lambda$, these are called simple $P_\lambda$-points. Kunen used this to separate the ultrafilter number  $\mathfrak{u}$ from the continuum, where $\mathfrak{u}$ is defined to be the least cardinality of a generating set for a uniform ultrafilter on $\omega$. Our main construction generalizes Kunen's construction to form generating sets of more complex types. In particular, we prove the following consistency result:
\begin{theorem}\label{Main Thm 1}
    Let $\lambda_0<\lambda_1$ be cardinals of uncountable cofinality. Then there is a ($\lambda_1^+$-directed closed)$\ast$ccc forcing extension in which there is an ultrafilter $U$ on $\omega$, with a $\subseteq^*$-generating set $\mathcal{B}\subseteq U$ such that $(\mathcal{B},\supseteq^*)\simeq \lambda_0\times\lambda_1$.  
\end{theorem}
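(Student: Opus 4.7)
The plan is to generalize Kunen's finite-support iteration of Mathias forcing, from producing a single $\subseteq^*$-decreasing $\lambda$-tower, to producing a two-dimensional matrix $\{A_{\alpha,\beta}:(\alpha,\beta)\in\lambda_0\times\lambda_1\}$ whose $\subseteq^*$-structure realizes the product order. Starting from GCH, I define a finite-support iteration of length $\max(\lambda_0,\lambda_1)$, whose stages are indexed by pairs $(\alpha,\beta)\in\lambda_0\times\lambda_1$ via a well-ordering extending the coordinate-wise order. At stage $(\alpha,\beta)$ the iterand $\dot{\mathbb{Q}}_{\alpha,\beta}$ is Mathias forcing with respect to a filter $F_{\alpha,\beta}$ generated by the previously added $A_{\alpha',\beta'}$ with $(\alpha',\beta')$ strictly below $(\alpha,\beta)$ in the product order, together with bookkeeping sets used to decide earlier names for subsets of $\omega$. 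The goal is that in $V^{\mathbb{P}}$ one has $A_{\alpha,\beta}\subseteq^* A_{\alpha',\beta'}$ if and only if $(\alpha',\beta')\le(\alpha,\beta)$ coordinate-wise.

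The ``$\Leftarrow$'' direction is automatic from Mathias forcing: the Mathias generic is almost-contained in every member of its defining filter, so including in $F_{\alpha,\beta}$ only sets from the principal downset of $(\alpha,\beta)$ yields these relations for free. Each iterand is $\sigma$-centered, and finite-support iterations of $\sigma$-centered posets are $\sigma$-centered and hence ccc, so the whole forcing is ccc. Ultrafilterness of $U=\bigcup_{(\alpha,\beta)} F_{\alpha,\beta}$ is obtained by standard bookkeeping: under GCH the number of nice $\mathbb{P}$-names for subsets of $\omega$ that have appeared by any stage is at most $\max(\lambda_0,\lambda_1)$, matching the iteration length, so at cofinally many stages one can arrange the Mathias generic to be almost contained in either a pre-enumerated set or its complement.

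The main obstacle is the ``$\Rightarrow$'' direction: for product-incomparable $(\alpha,\beta)$ and $(\alpha',\beta')$ I need $A_{\alpha,\beta}\not\subseteq^* A_{\alpha',\beta'}$. Say $\alpha'>\alpha$ but $\beta'<\beta$, and $(\alpha',\beta')$ is processed after $(\alpha,\beta)$. By construction $A_{\alpha,\beta}\notin F_{\alpha',\beta'}$, so one hopes that $\omega\setminus A_{\alpha,\beta}$ is $F_{\alpha',\beta'}$-positive, whereupon a standard density argument for Mathias forcing shows the generic $A_{\alpha',\beta'}$ meets $\omega\setminus A_{\alpha,\beta}$ infinitely often. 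The real work lies in maintaining the invariant that $\omega\setminus A_{\alpha,\beta}$ remains $F_{\alpha',\beta'}$-positive throughout the iteration: the bookkeeping sets added to $F_{\alpha',\beta'}$ for the sake of ultrafilterness must be chosen so as never to force $F_{\alpha',\beta'}$ to concentrate on $A_{\alpha,\beta}$ for any product-incomparable $(\alpha,\beta)$. Reconciling this constraint with ultrafilterness of $U$---so that every subset of $\omega$ in the extension is eventually decided, but without collapsing the transverse structure required by the product order---is the crux of the proof, and will likely require the bookkeeping at stage $(\alpha',\beta')$ to only decide sets that are compatible with every $\omega\setminus A_{\alpha,\beta}$ for product-incomparable $(\alpha,\beta)$ seen so far, a choice whose feasibility must be verified by an inductive positivity argument.
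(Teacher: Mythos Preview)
Your plan differs substantially from the paper's, and the step you label ``the crux'' is exactly where the outline has a real gap.

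A prior issue you pass over: for $F_{\alpha,\beta}$ to be a filter at all, the generators $\{A_{\alpha',\beta'}:(\alpha',\beta')<(\alpha,\beta)\}$ must have the finite intersection property. This is not automatic. If $\alpha_1<\alpha$ and $\beta_1<\beta$ then $(\alpha_1,\beta)$ and $(\alpha,\beta_1)$ are both strictly below $(\alpha,\beta)$ but their join is $(\alpha,\beta)$ itself, so no already-constructed $A_{\alpha^*,\beta^*}$ refines both. Showing $|A_{\alpha_1,\beta}\cap A_{\alpha,\beta_1}|=\aleph_0$ is essentially the same problem as your positivity invariant, so the difficulty arises earlier than you place it --- it is needed just to define the iteration.

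Worse, in a linear iteration the bookkeeping is in tension with \emph{two} constraints, not one. For product-incomparable $(\alpha,\beta)\prec(\alpha',\beta')$ you need $\omega\setminus A_{\alpha,\beta}$ to be $F_{\alpha',\beta'}$-positive (for $A_{\alpha',\beta'}\not\subseteq^*A_{\alpha,\beta}$) \emph{and} $A_{\alpha,\beta}$ itself to be $F_{\alpha',\beta'}$-positive (so that $A_{\alpha',\beta'}\cap A_{\alpha,\beta}$ is infinite, needed for FIP at any common upper bound). Thus $F_{\alpha',\beta'}$ must not decide $A_{\alpha,\beta}$, and you cannot extend to an ultrafilter of the current model at each stage. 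Your scheme of restricting bookkeeping to ``compatible'' names then owes an argument that every name is eventually handled somewhere; none is given, and in a linear iteration there is no evident amalgamation mechanism to supply one.

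The paper sidesteps all of this by iterating \emph{nonlinearly} along the partial order $\lambda_0\times\lambda_1$: for each $d$ there is a sub-poset $\mathbb P_{<d}$ of conditions supported in $D_{<d}$, and $\dot U_{<d}$ is a full ultrafilter of the intermediate model $V^{\mathbb P_{<d}}$ extending $\bigcup_{e<d}U_e$. Ultrafilterness of the final $\bigcup_d U_d$ is then automatic from countable directedness and ccc (every real appears in some $V^{\mathbb P_{\le d}}$), so no bookkeeping is needed. All the content moves to showing $\bigcup_{e<d}U_e$ has FIP, and here two-dimensionality is genuinely used: given incomparable $e_0,e_1<d$ and $Y_i\in U_{e_i}$, one passes to the meet $e^m=e_0\wedge e_1$, forms in $V[G_{\le e^m}]$ the sets of ``possible values'' of $Y_0,Y_1$ over the respective quotients, observes both lie in the single ultrafilter $U_{e^m}$, and amalgamates witnesses over the common submodel $V[G_{\le e^m}]$. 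This amalgamation over a meet is precisely what a linear well-ordering of stages does not provide. Once FIP holds, the $\Rightarrow$ direction is a short density argument exploiting that coordinate $e$ is untouched by $\mathbb P_{\le d}$ when $e\nleq d$.

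Minor point: finite-support iterations of $\sigma$-centered posets are $\sigma$-centered only up to length $\mathfrak c$; beyond that one gets Knaster (hence ccc), which is what the paper proves and what suffices here.
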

The proof uses non-linear iterations of Mathias forcing. Such iterations were considered before by Hechler \cite{Hechler} who proved a similar result to obtain dominating families in the structure $(\omega^\omega,\leq^*)$. Several results in the spirit of Hechler can be found in the literature \cite{Barnabas,BurkeKada,BARTOSZYNSKIKada}. Blass and Shelah \cite{BlassShelah} considered a non-linear iteration involving Mathias forcing, however, the Mathias component only really appears in one dimension of this construction. The major difficulty, however, is to obtain at the end of the process (i.e. iteration) a uniform ultrafilter. Milovich \cite[Thm. 5.10]{MilovichNeoterian}, produced via traditional linear iteration an ultrafilter $U$ on $\omega$ which is generated by a set $\mathcal{B}$ isomorphic to a cofinal subset of $\lambda_0\times\lambda_1$.

In an attempt to realize other orders, and generalizing Kunen's simple $P_\lambda$-point, we define:
\begin{definition}
    Given an ordered set $\mathbb{D}=(D,\leq_D)$, we say that a filter $F$ is a {\em simple  $P_{\mathbb{D}}$-point} if there is a $\subseteq^*$-generating set $\mathcal{B}\subseteq F$ such that $(\mathcal{B},\supseteq^*)\simeq \mathbb{D}$.
\end{definition} 
Hence Theorem \ref{Main Thm 1} says that in a forcing extension, there is a uniform simple $\mathbb{P}_{\lambda_0\times\lambda_1}$-point ultrafilter. Our result is much more general and applies to every well-founded $\sigma$-directed set.
\begin{theorem}\label{main thm 1}
    Let $\mathbb{D}$ be a well-founded, $\sigma$-directed set, which has a possible order type\footnote{$\mathbb{D}$ has $\lambda$ as a possible order type if there is a linear order $\prec$ on $D$ of order type $\lambda$, such that $<_\mathbb{D}\subseteq \prec$.}  $\lambda$. Then there is a ($\lambda^+$-directed closed)$\ast$ccc forcing extension in which there is a simple $P_\mathbb{D}$-point ultrafilter on $\omega$. 
\end{theorem}
Starting from any model, the $\lambda^+$-closed forcing in the previous theorem is simply the standard forcing to add a $\diamondsuit_{\lambda^+}(\lambda^+\cap \text{cof}(\lambda))$ sequence with initial segments. If this diamond already holds in the ground model, we can directly force with the ccc forcing. We also note that our method works equally well if $\diamondsuit_{\mu}(\mu\cap \text{cof}(\lambda))$ holds for any $\mu\geq\lambda^+$ regular.  Hence we have the following corollary:
\begin{corollary}\label{main cor 1}
    Let $\mathbb{D}$ be a well-founded, $\sigma$-directed set, which has a possible order type $\lambda$. Assume that $\diamondsuit_{\mu}(\mu\cap \text{cof}(\lambda))$ holds for some regular cardinal $\mu>\lambda$. There is a ccc forcing extension  in which $2^\omega = \mu$ and there is a simple $P_\mathbb{D}$-point ultrafilter on $\omega$. 
\end{corollary}

It is essential to note that for any given $\mathbb{D}$, a simple $\mathbb{P}_{\mathbb{D}}$-point can easily be forced if we do not require some nice properties of the forcing which is used to create it, so as to preserve the
properties of $\mathbb D$.
For example, we can always collapse $|\mathbb{D}|$ to $\omega_1$ and run Kunen's argument. Hence the strength  of Theorem \ref{main thm 1} and Corollary \ref{main cor 1} lies within the type of forcings we use. In Section \ref{section: omega case} we prove these theorems, and at the beginning of Section \ref{Section: Tukey order} we establish that the chain condition and closure of the forcing ensures the preservation of the structure of $\mathbb{D}$.

Generalizing Kunen's result to measurable cardinals requires considerable effort and does not run smoothly using the standard indestructibility methods. In an unpublished work, Woodin and Carlson showed that from a supercompact cardinal, this is possible. Recently, this type of iteration appeared in several constructions \cite{GSOnDO,5authors,BROOKETAYLOR201737,DzSh} guided by (for example) a diamond sequence so that a supercompact embedding lifts in a way that the normal ultrafilter derived from the lifted embedding has a $\subseteq^*$-decreasing generating set of the desired length. This can be used to obtain a model with $2^\kappa>\kappa^+$, with a $\kappa$-complete ultrafilter over $\kappa$ generated by fewer than $2^\kappa$-many sets. It remains open whether this can be achieved from weaker large cardinal assumptions. It is this type of argument which we use to prove Theorem \ref{main thm 1}.

Our method is flexible enough to incorporate lifting arguments and produce similar results on measurable cardinals:
\begin{theorem}\label{main theorem 2}
    Let $\kappa$ be a supercompact cardinal and $\mathbb{D}$ be a well-founded, $\kappa^+$-directed set with a possible order type $\lambda$.  Then for every  regular $\mu > \lambda$, there is an extension by a ($\kappa$-cc) $\ast$ ($\mu$-directed-closed) $\ast$ ($\kappa$-directed-closed $\kappa^+$-cc) forcing in which there is a normal ultrafilter which is a simple $P_{\mathbb{D}}$-point ultrafilter over $\kappa$.
\end{theorem}
The first forcing here is the the usual Laver preparation \cite{Laver1978} which makes the supercompactness of $\kappa$-indestructible under $\kappa$-directed closed forcings, and the second forcing adds 
a $\diamondsuit_\mu(\mu \cap \text{cof}(\lambda))$-sequence. 
Since the Laver preparation forcing
 is $\kappa$-cc with cardinality $\kappa$, it will preserve such a diamond sequence if it already existed. 
\begin{corollary}\label{main corollary 2}
Let $\kappa$ be a supercompact cardinal, let $\mathbb{D}$ be a well-founded, $\kappa^+$-directed set with a possible order type $\lambda$, and let $\mu > \lambda$ be a regular cardinal such that
$\diamondsuit_\mu(\mu \cap \text{cof}(\lambda))$ holds.
Then there is an extension by a ($\kappa$-cc)  $\ast$ ($\kappa$-directed-closed $\kappa^+$-cc) forcing in which there is a normal ultrafilter which is a simple $P_{\mathbb{D}}$-point ultrafilter over $\kappa$.
\end{corollary}
These results can also be used to construct complex generating sets for the club filter on a measurable cardinal as well:
\begin{corollary}\label{main corollary 3}
    Let $\kappa$ be a supercompact cardinal and $\mathbb{D}$ be a well-founded, $\kappa^+$-directed set with a possible order type $\lambda$.  Then for every  regular $\mu > \lambda$, there is an extension by a ($\kappa$-cc) $\ast$ ($\mu$-directed-closed) $\ast$ ($\kappa$-directed-closed $\kappa^+$-cc) forcing in which $\kappa$
    is measurable and the club filter on  $\kappa$ is a simple $P_{\mathbb{D}}$-point.
\end{corollary}
Theorem \ref{main theorem 2} and Corollaries \ref{main corollary 2},\ref{main corollary 3} will be proven in Section \ref{Section at a measur}.

All the ultrafilters which are produced in this paper are $P$-points. This is an integral feature of the Mathias iteration. Our results are optimal in the class of $P$-points since if a $P$-point over $\kappa\geq\omega$ is a simple $\mathbb{P}_{\mathbb{D}}$-point, then $\mathbb{D}$ has to be $\kappa^+$-directed. It is also evident that every order has a well-founded cofinal subset. So our results say that every such order can also be realized.

Finally, in Seciton \ref{Section: Tukey order}, we provide several applications of our construction.
The first is the ability to control the Tukey-type of certain ultrafilters. Recall that the Tukey order on directed sets is defined by $(\mathbb{P},\leq_P)\leq_T (\mathbb{Q},\leq_{Q})$ if there is a Tukey reduction $f:\mathbb{P}\to\mathbb{Q}$, that is, for every unbounded $\mathcal{B}\subseteq \mathbb{P}$, $f[\mathcal{B}]$ is unbounded in $\mathbb{Q}$. We say that $\mathbb{P}\equiv_T\mathbb{Q}$ if $\mathbb{P}\leq_T \mathbb{Q}$ and $\mathbb{Q}\leq_T\mathbb{P}$. In this paper, we will only consider ultrafilters $U$ ordered by reversed inclusion $(U,\supseteq)$ and almost inclusion $(U,\supseteq^*)$. The study of the Tukey order finds its origins in the concept of Moore-Smith convergence of nets and has been studied extensively on general ordered sets \cite{tukey,Isbell65,Solecki/Todorcevic04}. The Tukey-type $(U,\supseteq)$ also has been subject to a considerable amount of work, especially when $U$ is an ultrafilter on $\omega$ \cite{Isbell65,DobrinenJSL15,Dobrinen/Todorcevic11,Blass/Dobrinen/Raghavan15,Raghavan/Shelah17,Raghavan/Verner19,Milovich08,tomFanxin,Tomcommute} and lately this has been also looked at in the context of measurable cardinals \cite{TomNatasha,TomGabe24} due to its close relation to the Galvin property and applicability to Prikry-type forcing theory \cite{Parttwo,GitDensity,bgp}. The Tukey-type of $(U,\supseteq^*)$ as also been of interest \cite{Milovich08,TomNatasha} as the properties of a
$\subseteq^*$-generating set
      for an ultrafilter are related to this Tukey-type. In fact, most of the combinatorial characteristics of ultrafilters (see for example \cite{Brendle1997UltrafiltersO}) are formulated in terms of $\subseteq^*$ rather than $\subseteq$. The connection to the Tukey order is noticeable once the Tukey order is expressed in terms of cofinal maps. Given a partially ordered set $(D,\leq_D)$, a set $B\subseteq D$ is called cofinal in $(D,\leq_D)$ if for every $d\in D$ there is $b\in B$ such that $d\leq_D b$. Schmidt Duality \cite{Schmidt55} in the special case of ultrafilters (or if there are least upper bounds) asserts that:
For any two ultrafilters $U,W$, $U\leq_T W$ if and only if there is a function $f:W\to U$ which is monotone (i.e.\ $A\subseteq B\Rightarrow f(A)\subseteq f(B)$) and has a cofinal image in $U$.  

It is not hard to check that whenever $U$ is a simple $P_{\mathbb{D}}$-point, then $(U,\supseteq^*)\equiv_T \mathbb{D}$. So, the analysis in this paper is finer than the Tukey-type analysis of ultrafilters, but can be used to control it. For example, we use our construction to find an ultrafilter $U$ which is Tukey-equivalent to $\lambda_0\times\lambda_1$. From this we can analyze the Tukey spectrum and the depth spectrum (see Definitions \ref{Def: Tukey Spec}, \ \ref{Def: Depth Spec}) and show the consistency of an ultrafilter with a non-convex spectrum (both depth and Tukey). This was asked by the first author in \cite{TomCohesive}.
\begin{theorem} {}\
    \begin{enumerate}
        \item It is consistent that there is an ultrafilter $U$ on $\omega$ such that $\spec_T(U)$ and $\spec_{dp}(U)$ are non-convex sets.
        \item Relative to the existence of a supercompact cardinal, it is consistent that there is a normal ultrafilter $U$ such that $\spec_T(U)$ and $\spec_{dp}(U)$ are non-convex sets.
    \end{enumerate}
\end{theorem}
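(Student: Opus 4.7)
The plan is to apply the simple $P_{\lambda_0\times\lambda_1}$-point constructions of Theorem~\ref{Main Thm 1} (for part (1)) and its supercompact analogue (for part (2)) with regular cardinals $\lambda_0<\lambda_1$ chosen to leave a regular cardinal strictly between them, and then to read off the non-convexity from the order structure of $\lambda_0\times\lambda_1$. Concretely, I would take $\lambda_0=\aleph_1,\lambda_1=\aleph_3$ (resp.\ $\lambda_0=\kappa^+,\lambda_1=\kappa^{+3}$), yielding an ultrafilter $U$ with $(U,\supseteq^*)\equiv_T\lambda_0\times\lambda_1$ via a base $(\mathcal{B},\supseteq^*)\cong\lambda_0\times\lambda_1$. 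Since $U$ is a simple $P_{\mathbb{D}}$-point and $\mathcal{B}$ generates $U$ under $\subseteq^*$, both $\spec_T(U)$ and $\spec_{dp}(U)$ can be read off the directed set $\mathbb{D}=\lambda_0\times\lambda_1$: unbounded subfamilies and $\supseteq^*$-decreasing chains in $U$ correspond, up to cofinal refinement inside $\mathcal{B}$, to the analogous objects in $\mathbb{D}$.

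The crucial combinatorial observation is that unbounded strictly-increasing $\mu$-chains in $\lambda_0\times\lambda_1$ exist only when $\mu\in\{\lambda_0,\lambda_1\}$. Indeed, given such a chain $\{(a_\xi,b_\xi):\xi<\mu\}$ with $\lambda_0<\mu<\lambda_1$, the weakly-increasing first-coordinate sequence takes at most $\lambda_0<\mu$ distinct values, so by regularity of $\mu$ some value $\alpha^*<\lambda_0$ is maintained on a terminal segment of $\mu$ of length $\mu$. On this terminal segment the second coordinate strictly increases through $\mu$ values in $\lambda_1$, but $\mu<\cf(\lambda_1)=\lambda_1$ forces these values to be bounded by some $\beta^*<\lambda_1$; weak monotonicity of both coordinates then makes $(\alpha^*,\beta^*)$ an upper bound of the whole chain, contradicting unboundedness. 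Both $\lambda_0$ (first axis) and $\lambda_1$ (second axis) obviously give rise to such chains, so $\aleph_2$ (resp.\ $\kappa^{++}$) witnesses non-convexity of $\spec_{dp}(U)$.

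The Tukey-spectrum case reduces to the analogous statement for arbitrary monotone cofinal maps $f:\lambda_0\times\lambda_1\to\mu$, which I expect to be the main technical obstacle: ruling out $f$'s that are not simply induced by projections requires a fiber-analysis over the first coordinate, combined with the same use of regularity of $\lambda_1$, to derive a contradiction with unboundedness of the image. Once this is established, $\spec_T(U)$ similarly misses $\aleph_2$ (resp.\ $\kappa^{++}$), and both spectra are non-convex as required.
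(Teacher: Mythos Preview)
Your overall strategy---force a simple $P_{\lambda_0\times\lambda_1}$-point with a regular cardinal strictly between $\lambda_0$ and $\lambda_1$, then compute both spectra---is the right one, and matches the paper. But there is a genuine gap in how you handle the depth spectrum, and you make the Tukey side harder than it needs to be.

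\textbf{The gap.} You compute $\spec_{dp}(\lambda_0\times\lambda_1)=\{\lambda_0,\lambda_1\}$ and then assert that this equals $\spec_{dp}(U)$ because ``chains in $U$ correspond, up to cofinal refinement inside $\mathcal{B}$, to the analogous objects in $\mathbb{D}$.'' That correspondence is not justified and is not obviously true. The cofinal embedding $\mathcal{B}\hookrightarrow (U,\supseteq^*)$ is a monotone Tukey reduction, so it gives $\spec_{dp}(\mathcal{B})\subseteq\spec_{dp}(U)$; but for the reverse inclusion you would need a monotone Tukey reduction $(U,\supseteq^*)\to\mathcal{B}$, and choosing for each $A\in U$ some $b_{i,j}\subseteq^* A$ need not be monotone (a $\supseteq^*$-chain in $U$ does not push down to a chain in $\mathcal{B}$ in any evident way). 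So your chain analysis in $\lambda_0\times\lambda_1$, while correct, does not by itself bound $\spec_{dp}(U)$ from above.

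\textbf{The fix, and the paper's approach.} The paper avoids this entirely by using the elementary inclusion $\spec_{dp}(U)\subseteq\spec_T(U)$: any $\supseteq^*$-increasing unbounded $\mu$-chain is in particular a family witnessing $\mu\leq_T U$. Combined with $\spec_T(U)=\spec_T(\lambda_0\times\lambda_1)=\{\lambda_0,\lambda_1\}$ (the Tukey spectrum \emph{is} a Tukey invariant), this immediately gives $\spec_{dp}(U)\subseteq\{\lambda_0,\lambda_1\}$, and the axis chains supply the other inclusion. So once the Tukey computation is done, the depth computation is free; your direct chain argument in $\lambda_0\times\lambda_1$ is correct but superfluous.

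\textbf{On the Tukey spectrum.} Your proposed fiber analysis does work (for regular $\lambda_0<\mu<\lambda_1$, a monotone $f:\lambda_0\times\lambda_1\to\mu$ has each column $f(\alpha,\cdot)$ bounded below $\mu$ since $\cf(\lambda_1)>\mu$, and then the column-suprema map $\lambda_0\to\mu$ would have to be cofinal, contradicting $\cf(\mu)>\lambda_0$). But the paper just invokes the standard identity $\spec_T(\mathbb{P}\times\mathbb{Q})=\spec_T(\mathbb{P})\cup\spec_T(\mathbb{Q})$ together with $\spec_T(\lambda)=\{\cf(\lambda)\}$ for linear $\lambda$, so what you flagged as the ``main technical obstacle'' is in fact a one-line citation.
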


Our second application relates to generalized cardinal characteristics. Recall that
$$\mathfrak{b}_\kappa=\min\{|\mathcal{F}|\mid \mathcal{F}\subseteq\kappa^\kappa\text{ is }{\leq^*}\text{-unbounded}\}$$
$$\mathfrak{d}_\kappa=\min\{|\mathcal{F}|\mid \mathcal{F}\subseteq\kappa^\kappa\text{ is }{\leq^*}\text{-dominating}\}$$
$$\mathfrak{u}_\kappa=\min\{|\mathcal{B}|\mid \mathcal{B} \ {\subseteq^*}\text{-generates a uniform ultrafilter on }\kappa\}$$
$$\mathfrak{u}^{com}_\kappa=\min\{|\mathcal{B}|\mid \mathcal{B} \ {\subseteq^*}\text{-generates a }\kappa\text{-complete ultrafilter on }\kappa\}$$

As we mentioned before, at a measurable cardinal, the only previously known method to produce models where $\mathfrak{u}_\kappa<2^\kappa$ is via the linear Mathias iteration. The computation of the values of these cardinal characteristics was done in \cite{BROOKETAYLOR201737} where it was shown that after a linear iteration of Mathias forcing, $\mathfrak{u}_\kappa=\mathfrak{u}^{com}_\kappa=\mathfrak{b}_\kappa=\mathfrak{d}_\kappa$. Lately, the first and the third author \cite{TomGabe2025} generalized this and showed that merely the presence of a simple $P_\lambda$-point (which is the kind of ultrafilters produced by the Mathias iteration) causes $\mathfrak{u}_\kappa=\mathfrak{u}^{com}_\kappa=\mathfrak{b}_\kappa=\mathfrak{d}_\kappa=\lambda$. So the question of separating the cardinal characteristics in a model where $\mathfrak{u}_\kappa<2^\kappa$ remained open. Using our methods, we can prove the following:
\begin{theorem}
    Relative to the existence of a supercompact cardinal, it is consistent that $\kappa$ is measurable and  $\mathfrak{b}_\kappa<\mathfrak{d}_\kappa=\mathfrak{u}_\kappa<2^\kappa$. In particular, there is no simple $P$-point in that model.
\end{theorem}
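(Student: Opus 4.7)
The plan is to apply the grid--Mathias construction at the measurable level with carefully chosen parameters. Starting from a model $V$ in which $\kappa$ is supercompact and $2^\kappa\geq\lambda_1^{+}$ has been arranged by Laver preparation, fix regular cardinals $\kappa<\lambda_0<\lambda_1<2^\kappa$ and apply the paper's main ${<}\kappa$-directed closed, $\kappa^+$-cc construction to obtain a generic extension $V[G]$ containing a $\kappa$-complete simple $P_{\lambda_0\times\lambda_1}$-point ultrafilter $U$ on $\kappa$. The $\kappa^+$-cc preserves $2^\kappa\geq\lambda_1^{+}$, and the existence of a $\subseteq^*$-generating family $\mathcal{B}\subseteq U$ of cardinality $|\lambda_0\times\lambda_1|=\lambda_1$ gives $\mathfrak{u}_\kappa\leq\lambda_1<2^\kappa$ immediately.

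Next I compute $\mathfrak{d}_\kappa$ and $\mathfrak{b}_\kappa$ directly from the grid of generic Mathias reals $\{f_{(\alpha,\beta)}\mid(\alpha,\beta)\in\lambda_0\times\lambda_1\}$. The inequality $\mathfrak{d}_\kappa\leq\mathfrak{u}_\kappa\leq\lambda_1$ follows from the standard reduction sending each $B\in\mathcal{B}$ to its increasing enumeration $e_B$, which produces a dominating family. For $\mathfrak{d}_\kappa\geq\lambda_1$ I would isolate the chain $\{f_{(0,\beta)}\mid\beta<\lambda_1\}$ along a single column: it is $\leq^*$-increasing, and a $\kappa^+$-cc plus ${<}\kappa$-directed closure support argument shows that every $g\in\kappa^\kappa\cap V[G]$ is already present in some intermediate extension $V[G\restriction S]$ with $|S|\leq\kappa<\lambda_1$; the generic Mathias real at a later stage of the column then dominates $g$, preventing $g$ from bounding the whole chain. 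For $\mathfrak{b}_\kappa\leq\lambda_0$ I would use the transverse family $\{f_{(\alpha,0)}\mid\alpha<\lambda_0\}$, which is pairwise $\leq^*$-incomparable since the coordinates $(\alpha,0)$ and $(\alpha',0)$ are incomparable in $\lambda_0\times\lambda_1$; an analogous support argument shows that any proposed upper bound $g$ lives in an intermediate extension missing some column $\alpha$, and the Mathias generic $f_{(\alpha,0)}$ cannot be dominated by any function produced before its stage. Combining, $\mathfrak{b}_\kappa\leq\lambda_0<\lambda_1=\mathfrak{d}_\kappa=\mathfrak{u}_\kappa<2^\kappa$.

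For the ``in particular'' clause, any simple $P$-point in $V[G]$ would, by \cite{TomGabe2025}, yield $\mathfrak{b}_\kappa=\mathfrak{d}_\kappa$, contradicting the strict inequality just established, so no simple $P$-point can exist in the model. The principal obstacle is the support/factoring argument underlying both the lower bound for $\mathfrak{d}_\kappa$ and the upper bound for $\mathfrak{b}_\kappa$: one must verify that names for functions in $\kappa^\kappa$ under the grid Mathias iteration depend only on a $\kappa$-sized portion of the underlying poset, and that Mathias generics at unused coordinates evade domination by functions produced outside of their $\leq_{\lambda_0\times\lambda_1}$-past. This should be handled by adapting the standard Mathias-iteration factorization lemmas used in the paper's main construction, combined with $\kappa^+$-cc and the regularity of $\lambda_0$ and $\lambda_1$.
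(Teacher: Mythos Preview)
Your setup for obtaining $\mathfrak{u}_\kappa<2^\kappa$ does not work. The paper's grid--Mathias construction (Theorem~\ref{msble}) requires GCH at and above $\kappa$ in the ground model, because the diamond machinery needs $|H_{\lambda_1}|=\lambda_1$; you cannot simply ``arrange $2^\kappa\geq\lambda_1^+$ by Laver preparation'' and then run it. Moreover, starting from GCH and iterating along $\lambda_0\times\lambda_1$ yields $2^\kappa=\lambda_1$ in the extension, so $\mathfrak{u}_\kappa=2^\kappa$. The paper addresses this via Proposition~\ref{prop: small ultrafilter number}: one iterates along a \emph{longer} grid $\lambda_0\times\lambda_2^+$ (with an extra cardinal $\lambda_2>\lambda_1$), and then passes to a suitable intermediate model in which $2^\kappa=\lambda_2$ while a cofinal segment of the generics still forms a $\lambda_0\times\lambda_1$-base.

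Your direct computation of $\mathfrak{b}_\kappa$ also contains an error: the coordinates $(\alpha,0)$ and $(\alpha',0)$ are \emph{comparable} in $\lambda_0\times\lambda_1$ (one dominates the other in the first coordinate), so the family $\{f_{(\alpha,0)}:\alpha<\lambda_0\}$ is a $\leq^*$-chain, not a $\leq^*$-antichain. More importantly, the paper avoids all of the support/factoring arguments you flag as the ``principal obstacle'': since the ultrafilter $U$ produced is \emph{normal}, Corollary~\ref{Cor: Clubs} shows that the club filter $\mathrm{Cub}_\kappa$ is itself a simple $P_{\lambda_0\times\lambda_1}$-point. Then the characterizations $\mathfrak{d}_\kappa=\mathfrak{d}(\mathrm{Cub}_\kappa,\supseteq^*)$ and $\mathfrak{b}_\kappa=\mathfrak{b}(\mathrm{Cub}_\kappa,\supseteq^*)$, together with the elementary facts $\mathfrak{d}(\lambda_0\times\lambda_1)=\lambda_1$ and $\mathfrak{b}(\lambda_0\times\lambda_1)=\lambda_0$, give $\mathfrak{b}_\kappa=\lambda_0<\lambda_1=\mathfrak{d}_\kappa$ immediately, and the ZFC inequality $\mathfrak{d}_\kappa\leq\mathfrak{u}_\kappa\leq\mathfrak{u}^{com}_\kappa\leq\lambda_1$ finishes. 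Your final paragraph on the nonexistence of simple $P$-points is correct and matches the paper.
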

\section{Some preliminaries}
Given a partial order $\mathbb{D}=(D,\leq_D)$, we define $d_1<_Dd_2$ if $d_1\leq_D d_2$ and $d_1\neq d_2$. For each \(d\in D\), let \(D_{<d} = \{e\in D : e <_D d\}\); \(D_{\leq d}\) is defined similarly.

In this paper, we would like to study what are the possible posets $\mathbb{D}$  for which there are uniform simple $P_{\mathbb{D}}$-point ultrafilters, both on $\omega$ and on measurable cardinals. For the following easy proposition, we define that a poset $\mathbb{P}=(P,\leq_P)$ embeds cofinally into a poset $\mathbb{Q}=(Q,\leq_Q)$ if there is an order preserving injection $j:\mathbb{P}\to \mathbb{Q}$ such that $\rng(j)$ is a cofinal subset of $\mathbb{Q}$.
\begin{proposition}\label{Prop: downwards type} Let $U$ be an ultrafilter.  If $U$ is a simple $P_{\mathbb{D}}$-point and $\mathbb{D}'$ is embedded cofinally in $\mathbb{D}$, then $U$ is a simple  $P_{\mathbb{D}'}$-point.
\end{proposition}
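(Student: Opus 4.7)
The plan is to transport a witness on the $\mathbb{D}$-side to a witness on the $\mathbb{D}'$-side via the given embedding. Let $\varphi:\mathbb{D}\to(\mathcal{B},\supseteq^*)$ be the order-isomorphism witnessing that $U$ is a simple $P_{\mathbb{D}}$-point, and let $j:\mathbb{D}'\to\mathbb{D}$ be the order-preserving injection whose range is cofinal in $\mathbb{D}$. I would then set
\[
\mathcal{B}' \;:=\; \{\varphi(j(d')) : d'\in D'\}\;\subseteq\; U
\]
and claim that $\mathcal{B}'$ witnesses that $U$ is a simple $P_{\mathbb{D}'}$-point.

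First I would verify the order-isomorphism $(\mathcal{B}',\supseteq^*)\simeq \mathbb{D}'$. Since $\varphi$ and $j$ are order-preserving injections and $j$ has range in the domain of $\varphi$, the composition $\varphi\circ j:\mathbb{D}'\to\mathcal{B}'$ is an order-preserving bijection from $(\mathbb{D}',\leq_{D'})$ onto $(\mathcal{B}',\supseteq^*)$; its inverse is automatically order-preserving by injectivity of $\varphi\circ j$ and the fact that $\mathcal{B}'$ inherits its order from $\mathcal{B}$.

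The substantive point, which is still a short check, is that $\mathcal{B}'$ is a $\subseteq^*$-generating set for $U$. Fix $A\in U$. Since $\mathcal{B}$ generates $U$, pick $B\in\mathcal{B}$ with $B\subseteq^* A$, and let $d=\varphi^{-1}(B)\in \mathbb{D}$. By cofinality of $j[\mathbb{D}']$ in $\mathbb{D}$, there exists $d'\in \mathbb{D}'$ with $d\leq_{D} j(d')$. Applying $\varphi$, which translates $\leq_D$ to $\supseteq^*$, we obtain $\varphi(j(d'))\subseteq^* \varphi(d)=B\subseteq^* A$, and $\varphi(j(d'))\in\mathcal{B}'$ by construction.

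There is no real obstacle; the only thing to watch is the direction of the isomorphism, since $(\mathcal{B},\supseteq^*)\simeq \mathbb{D}$ means that $\leq_D$ corresponds to $\supseteq^*$, so cofinality in $\mathbb{D}$ (upward, under $\leq_D$) corresponds exactly to cofinality in $(\mathcal{B},\supseteq^*)$ (downward, under $\subseteq^*$), which is precisely the content of being a $\subseteq^*$-generating set. This makes the proof a direct and routine verification.
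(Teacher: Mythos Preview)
The paper states this proposition as ``easy'' and provides no proof; your argument is the natural one and is essentially correct.

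One small imprecision: your claim that ``its inverse is automatically order-preserving by injectivity of $\varphi\circ j$ and the fact that $\mathcal{B}'$ inherits its order from $\mathcal{B}$'' is not a valid general principle---an order-preserving bijection need not have an order-preserving inverse. What you actually need is that $j$ reflects the order, i.e., that $j$ is an order \emph{embedding}: $j(d_1')\leq_D j(d_2')\Rightarrow d_1'\leq_{D'} d_2'$. This is evidently what the paper intends by ``order preserving injection'' in its definition of cofinal embedding (and indeed the proposition would fail under the weaker reading). Once you invoke that, $\varphi\circ j$ is an order isomorphism onto $\mathcal{B}'$ for the right reason, and the rest of your argument---in particular the verification that $\mathcal{B}'$ is $\subseteq^*$-generating---is exactly as it should be.
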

There are some trivial limitations on a poset $\mathbb{D}$  for which there can be a simple $P_\mathbb{D}$-point ultrafilter. For example, $\mathbb{D}$ has to be directed. Also, $\mathbb{D}$ cannot have maximal elements. The following limitation generalizes \cite[Prop. 1.4]{Brendle1997UltrafiltersO}.
\begin{theorem}
    Let $U$ be a uniform $\kappa$-complete ultrafilter over $\kappa\geq\omega$. If $U$ is a simple $P_{\mathbb{D}}$-point, then $\mathbb{D}$ is not the increasing union of $\kappa$-many $\kappa$-directed non-cofinal subsets of $\mathbb{D}$.
\end{theorem}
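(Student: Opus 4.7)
The plan is to derive a contradiction by producing a single $X\in U$ that almost-refines every witness of non-cofinality simultaneously, and then exploiting the generating property of $\mathcal{B}$. Suppose for contradiction that $\mathbb{D}=\bigcup_{\alpha<\kappa}\mathbb{D}_\alpha$ is an increasing union of $\kappa$-many $\kappa$-directed non-cofinal subsets. Through the isomorphism $\varphi\colon\mathbb{D}\to(\mathcal{B},\supseteq^*)$ one transfers this to a parallel decomposition $\mathcal{B}=\bigcup_{\alpha<\kappa}\mathcal{B}_\alpha$, increasing, with each $\mathcal{B}_\alpha$ being $\kappa$-directed under $\supseteq^*$ and not $\supseteq^*$-cofinal in $\mathcal{B}$. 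For each $\alpha$, I would fix a witness $A_\alpha\in\mathcal{B}$ such that no $B\in\mathcal{B}_\alpha$ satisfies $B\subseteq^* A_\alpha$.

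The key step is to find $X\in U$ with $X\subseteq^* A_\alpha$ for every $\alpha<\kappa$. In the cleanest case, when $\kappa>\omega$ and $U$ is a normal measure on $\kappa$, I would take the diagonal intersection
\[
X:=\triangle_{\alpha<\kappa}A_\alpha=\{\xi<\kappa:\forall\alpha<\xi,\ \xi\in A_\alpha\};
\]
normality gives $X\in U$, while $X\setminus A_\alpha\subseteq\alpha+1$ has cardinality $<\kappa$, so $X\subseteq^* A_\alpha$ for every $\alpha$. For a general $\kappa$-complete $U$ on a measurable $\kappa$, I would first build by $\kappa$-recursion a $\supseteq^*$-decreasing chain $\langle B_\alpha:\alpha<\kappa\rangle$ in $\mathcal{B}$ with $B_\alpha\subseteq^* A_\alpha$, using the $\kappa$-directedness of each $\mathcal{B}_\gamma$ and the regularity of $\kappa$ to fit the $<\kappa$-many relevant elements into a single $\mathcal{B}_\gamma$ at each stage, and then pass to a Rudin--Keisler projection onto a normal measure (Ketonen), checking that a usable diagonal intersection of the image of $\langle B_\alpha\rangle$ exists in that normal ultrafilter. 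The case $\kappa=\omega$ is more delicate since $U$ is not $\sigma$-complete, so the pseudo-intersection must be obtained by a direct combinatorial diagonalization that leverages the directedness of each $\mathcal{B}_n$ and the fact that each witness $A_n$ lives in the structured generating set $\mathcal{B}$.

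Granted such $X$, the contradiction is immediate. Since $\mathcal{B}$ $\subseteq^*$-generates $U$ and $X\in U$, there is $B\in\mathcal{B}$ with $B\subseteq^* X$. By the increasing union together with the regularity of $\kappa$ (forced by uniformity and $\kappa$-completeness of $U$), one has $B\in\mathcal{B}_\gamma$ for some $\gamma<\kappa$; then $B\subseteq^* X\subseteq^* A_\gamma$ contradicts the choice of $A_\gamma$ as a witness of non-cofinality of $\mathcal{B}_\gamma$. The main obstacle is thus the construction of $X$: immediate in the normal $\kappa>\omega$ case via the diagonal intersection, somewhat subtle in the non-normal $\kappa$-complete case where the Rudin--Keisler descent must preserve enough of the generating structure, and most delicate when $\kappa=\omega$, where normality-based machinery is unavailable and a purely combinatorial diagonalization is required.
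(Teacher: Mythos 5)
Your closing step is fine as far as it goes, and in the special case where $U$ is a normal measure on $\kappa>\omega$ your diagonal-intersection argument does prove the theorem cleanly. The fatal problem is the key step in the remaining cases: the set $X\in U$ with $X\subseteq^* A_\alpha$ for all $\alpha<\kappa$, on which your whole strategy hinges, \emph{provably does not exist} under the hypotheses you are trying to refute, unless normality hands it to you for free. Indeed, suppose such an $X\in U$ existed. Since $\mathcal{B}$ generates $U$, pick $B=\varphi(d)\in\mathcal{B}$ with $B\subseteq^* X\subseteq^* A_\alpha$ for every $\alpha$; via the isomorphism, $d$ is then an upper bound in $\mathbb{D}$ of all the indices of the witnesses $A_\alpha$, and $d$ lies in some $\mathbb{D}_\gamma$ --- exactly the contradiction you draw at the end. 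So asserting the existence of $X$ is equivalent to the conclusion you want, and you cannot get it by ``direct combinatorial diagonalization'': for $\kappa=\omega$ (the case needed for Corollary \ref{cor: limitation}, since every ultrafilter is $\omega$-complete) the hypothesis forces $(U,\supseteq^*)$ to contain a cofinal copy of a non-countably-directed order, so $U$ is provably not a P-point and the family $\{A_n\}$ has no pseudo-intersection in $U$. The non-normal $\kappa$-complete case has the same circularity, and your Rudin--Keisler detour goes the wrong way: a diagonal intersection taken in the normal projection pulls back to a set almost \emph{containing}, not almost contained in, the $B_\alpha$'s; moreover Fubini products show that $\kappa$-complete uniform ultrafilters need not admit pseudo-intersections of $\subseteq^*$-decreasing $\kappa$-chains.

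The paper's proof is structured precisely to avoid this. It fixes witnesses $e_i\in D_{i+1}$ unbounded over $D_i$, sets $A_i=\bigcap_{j\le i}b_{e_j}$, and proves that $\bigcap_{i<\kappa}A_i\notin U$ --- that is, it exploits the failure of the pseudo-intersection rather than trying to produce one. It then writes $U$ as the increasing union of the $\kappa$-complete filters $F_i$ generated by $(b_d)_{d\in D_i}$, shows $A_i\in F_r$ for $r>i$ but $A_i\notin F_i$, decomposes $A_0\setminus\bigcap_i A_i$ into the staircase pieces $C_\alpha=(\bigcap_{\beta<\alpha}A_\beta)\setminus A_\alpha$, and splits into even and odd levels; whichever half lands in $U$ must land in some $F_{i_1}$, which traps it inside $A_{i_1}$ and yields the contradiction. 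You would need an argument of this kind (or some other replacement for the nonexistent $X$) to cover $\kappa=\omega$ and the non-normal case.
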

\begin{proof}
    Suppose otherwise, that $\mathbb{D}=(D,\leq_D)$ and let $D=\bigcup_{i<\kappa}D_i$, where $\l D_i\mid i<\kappa\r$ is an $\subseteq$-increasing sequence of $\kappa$-directed non-cofinal subsets of $D$. By moving to a subsequence of the $D_i$'s if necessary, we may assume that for every $i$, there is an element $e_i$ of $D_{i+1}$ which is not bounded by any element of $D_i$.  

    Let $\mathcal{B}=(b_d)_{d\in D}$ be a $\subseteq^*$-generating set for $U$ witnessing that $U$ is a simple $P_{\mathbb{D}}$-point. Let $F_i$ be the $\kappa$-complete filter generated by $(b_d)_{d\in D_i}$. Note that since $D_i$ is $\kappa$-directed,  $(b_d)_{d\in D_i}$ is a $\subseteq^*$-generating set for $F_i$. Let  $F=\bigcup_{i<\kappa}F_i$ and let us argue that $F=U$. Indeed, the filters $F_i$ are increasing (as the $D_i$'s are increasing) and therefore $F$ is a filter. Clearly,  $\mathcal{B}$, which is a generating set for $U$, is included in $F$ and by maximality of ultrafilters, $F=U$. 
    
    Recall that $e_i\in D_{i+1}$ is not bounded by any element of $D_i$ and consider $A_i=\bigcap_{j\leq i} b_{e_j}$. Then:
    \begin{claim}
        {}\
    \begin{enumerate}
     \item The sequence $\l A_i\mid i<\kappa\r$ is $\subseteq$-decreasing.
        \item $A_i\in F_r$ for every $r>i$.
        \item $A_i\notin F_i$
        \item $\bigcap_{i<\kappa} A_i\notin U$
    \end{enumerate}
    \end{claim}
    \begin{proof}[Proof of claim.]  $(1)$ is trivial. For $(2)$, by the choice of $e_j$, $e_j\in D_{j+1}$. Since the $D_r$'s are increasing, $e_j\in D_r$ for every $r>j$. Hence for every $j\leq i$ and every $r>i$, $b_{e_j}\in F_r$. By $\kappa$-completeness, $A_i=\bigcap_{j\leq i} b_{e_j}\in F_r$.  For $(3)$, suppose otherwise that $A_i\in F_i$. Since $(b_d)_{d\in D_i}$ generates $F_i$, we can find $d\in D_i$  such that $b_d\subseteq^* A_i$. But then $b_d\subseteq^* b_{e_i}$, and since $((b_d)_{d\in D},\supseteq^*)\simeq \mathbb{D}$ we conclude that $e_i\leq_D d\in D_i$. This contradicts the choice of $e_i$.
    
    $(4)$ follows easily from $(3)$ and the fact that $\bigcup_{i<\kappa}F_i=U$.   $\qedhere_{\text{claim}}$
    \end{proof}
    Consider the set $A^*=A_0\setminus\bigcap_{i<\kappa}A_i$. By $(4)$ of the claim, $A^*\in U$. Let
    
    $C_\alpha=(\bigcap_{\beta<\alpha}A_\beta)\setminus A_\alpha$. Using $(1)$, it is not hard to check that $A^*$ can be partitioned as $A^*=\biguplus_{1\leq\alpha<\kappa}C_\alpha$.
    
    Next, we split. Define $A^*$:
    $$X_{\text{even}}=\bigcup_{\alpha<\kappa} C_{2\alpha}, \ X_{\text{odd}}=\bigcup_{\alpha<\kappa}C_{2\alpha+1}$$
    Since $X_{\text{even}}\cup X_{\text{odd}}=A^*\in U$, either $X_{\text{even}}\in U$ or $X_{\text{odd}}\in U$. Suppose for example that $X_{\text{even}}\in U$ (the argument in the case that $X_{\text{odd}}\in U$ is identical). Then there is $i_1<\kappa$ such that $X_{\text{even}}\in F_{i_1}$. By $(2)$ of the claim, and $\kappa$-completeness of $F_{i_1}$, $\bigcap_{\alpha<i_1}A_\alpha\in F_{i_1}$. Therefore $X_{\text{even}}\cap\bigcap_{\alpha<i_1}A_\alpha\in F_{i_1}$. Since the $F_r$'s are increasing, we may assume that $i_1=2\gamma+1$ for some $\gamma<\kappa$. Finally, we claim that $X_{\text{even}}\cap\bigcap_{\alpha<2\gamma+1}A_\alpha\subseteq A_{2\gamma+1}$. This produces a contradiction since $X_{\text{even}}\cap\bigcap_{\alpha<2\gamma+1}A_\alpha\in F_{2\gamma+1}$ while $A_{2\gamma+1}\notin F_{2\gamma+1}$.
     Indeed if $\nu\in X_{\text{even}}\cap\bigcap_{\alpha<2\gamma+1}A_\alpha$ then by definition of $X_{\text{even}}$ there is $\alpha^*<\kappa$ such that $\nu\in C_{2\alpha^*}$. Since $\nu\in \bigcap_{\alpha<2\gamma+1}A_\alpha$, it follows that $2\alpha^*>2\gamma+1$. This means that $\nu\in A_{2\gamma+1}.$
\end{proof}
\begin{corollary}\label{cor: limitation}
    There is no uniform simple $P_{\omega\times \omega_1}$-point ultrafilter on $\omega$.
\end{corollary}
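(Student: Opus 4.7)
The plan is to derive this as a direct instance of the preceding theorem, taking $\kappa = \omega$ and $\mathbb{D} = \omega \times \omega_1$. Since closure under finite intersections is automatic for filters, every uniform ultrafilter on $\omega$ is automatically a uniform $\omega$-complete ultrafilter in the sense of that theorem. So the task reduces to verifying that $\omega \times \omega_1$ can be written as the increasing union of $\omega$-many $\omega$-directed (equivalently, just directed) non-cofinal subsets; once that is in hand, the theorem forbids any simple $P_{\omega \times \omega_1}$-point ultrafilter on $\omega$.

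For the decomposition I would simply let $D_n = (n+1) \times \omega_1$ for $n < \omega$. These sets are $\subseteq$-increasing in $n$ with union equal to $\omega \times \omega_1$. Each $D_n$ is directed: given finitely many pairs $(a_k, b_k) \in D_n$ with $k<m$, the pair $(\max_{k<m} a_k,\, \sup_{k<m} b_k)$ lies in $D_n$ (because $\max_k a_k \leq n$ and $\sup_k b_k < \omega_1$ by regularity of $\omega_1$) and bounds all of them. And each $D_n$ is non-cofinal in $\omega \times \omega_1$, since $(n+1, 0)$ has first coordinate strictly larger than the first coordinate of any element of $D_n$, and hence cannot be dominated in $\omega \times \omega_1$ by any element of $D_n$. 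Feeding this witness into the preceding theorem yields the corollary.

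I do not foresee any real obstacle: all of the substantive work has already been packaged into the theorem, and the only content specific to this corollary is the choice of stratification, which is the obvious one by the first coordinate. The only small point to keep in mind is to distinguish ``$\omega$-directed'' (finite subsets have upper bounds, i.e.\ directed) from ``countably directed'' (countable subsets have upper bounds); we only need the former for the theorem to apply, and the $D_n$ above plainly satisfy it.
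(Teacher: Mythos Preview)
Your proposal is correct and is exactly the intended argument: the paper states the corollary with no proof, treating it as immediate from the preceding theorem, and your choice of stratification $D_n = (n+1)\times\omega_1$ is the natural one. One tiny cosmetic remark: you invoke regularity of $\omega_1$ to bound $\sup_{k<m} b_k$, but since only finitely many ordinals are involved the sup is just their maximum, so no regularity is needed there.
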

The ultrafilters we will be constructing are all $P$-points. This poses more restrictions on the possible $\mathbb{D}$ which we will be able to realize. 

Recall that an ultrafilter $U$ is a $P_\lambda$-point, if $(U,\supseteq^*)$ is  $\lambda$-directed.
\begin{proposition}
    $U$ is a $P_{\lambda}$-point iff for every (any) $\mathbb{D}$  such that $U$ is a simple $P_{\mathbb{D}}$-point, $\mathbb{D}$ is $\lambda$-directed.
\end{proposition}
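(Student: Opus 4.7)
The plan is to establish both directions by directly translating between $\subseteq^*$-statements about members of $U$ and $\leq_D$-statements about members of a generating set, using the isomorphism $(\mathcal{B},\supseteq^*)\simeq\mathbb{D}$ as a dictionary. The interpretation I will adopt is that the three statements are equivalent: (i) $U$ is a $P_\lambda$-point; (ii) for every $\mathbb{D}$ such that $U$ is a simple $P_\mathbb{D}$-point, $\mathbb{D}$ is $\lambda$-directed; (iii) there exists some $\mathbb{D}$ such that $U$ is a simple $P_\mathbb{D}$-point and $\mathbb{D}$ is $\lambda$-directed. Proving (i)$\Rightarrow$(ii) and (iii)$\Rightarrow$(i) will immediately yield the full circle, since (ii)$\Rightarrow$(iii) is vacuous if we are in a context where such a $\mathbb{D}$ exists (which is the standing assumption of the proposition).

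For (i)$\Rightarrow$(ii), fix any $\mathbb{D}=(D,\leq_D)$ and a $\subseteq^*$-generating set $\mathcal{B}=\{b_d:d\in D\}\subseteq U$ realizing $(\mathcal{B},\supseteq^*)\simeq\mathbb{D}$. Given $E\subseteq D$ with $|E|<\lambda$, consider the family $\{b_d:d\in E\}\subseteq U$. By the $P_\lambda$-point hypothesis, this family admits a pseudo-intersection $A\in U$, i.e. $A\subseteq^* b_d$ for every $d\in E$. Since $\mathcal{B}$ $\subseteq^*$-generates $U$, there is $d^*\in D$ with $b_{d^*}\subseteq^* A$, hence $b_{d^*}\subseteq^* b_d$ for every $d\in E$. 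Translating back through the isomorphism, this says $d\leq_D d^*$ for every $d\in E$, so $\mathbb{D}$ is $\lambda$-directed.

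For (iii)$\Rightarrow$(i), fix a witnessing $\mathbb{D}$ and generating set $\mathcal{B}=\{b_d:d\in D\}$ as above, with $\mathbb{D}$ $\lambda$-directed. Let $\{A_i:i<\mu\}\subseteq U$ with $\mu<\lambda$. By the generating property, pick $d_i\in D$ with $b_{d_i}\subseteq^* A_i$ for each $i<\mu$. By $\lambda$-directedness of $\mathbb{D}$, choose $d^*\in D$ with $d_i\leq_D d^*$ for all $i<\mu$. Then $b_{d^*}\subseteq^* b_{d_i}\subseteq^* A_i$ for all $i$, so $b_{d^*}\in U$ is the required pseudo-intersection, and $U$ is a $P_\lambda$-point.

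There is no real obstacle here; the proposition is essentially the observation that $\subseteq^*$-generating sets are cofinal in $(U,\supseteq^*)$, so $\lambda$-directedness transfers in both directions. The only point to be careful about is the biconditional phrasing ``for every (any)'': the argument above shows both the universal and existential versions are equivalent to $P_\lambda$-ness, which justifies the parenthetical identification.
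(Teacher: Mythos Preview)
Your argument is correct and is precisely the natural unwinding of the definitions. The paper states this proposition without proof, evidently regarding it as immediate from the fact that a $\subseteq^*$-generating set is cofinal in $(U,\supseteq^*)$, and your write-up makes this explicit in the expected way.

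One small remark: your justification of (ii)$\Rightarrow$(iii) appeals to a ``standing assumption'' that some such $\mathbb{D}$ exists, but in fact this is automatic---$(U/{=^*},\supseteq^*)$ itself always witnesses that $U$ is a simple $P_{\mathbb{D}}$-point for some $\mathbb{D}$---so no extra hypothesis is needed to close the circle.
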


\section{$\mathbb{D}$-based iteration} \label{Dbased}
In this section we will describe a forcing iteration which is indexed by a well-founded partial order. As we noted in the introduction, similar iterations appeared in various places in the literature. Notably, in the works of Shelah (see, for example, \cite{Shelah2000, Shelah2004}), this type of iteration is modeled as linear iteration with \emph{restricted memory} --- so in Shelah's language, asking the memory to be transitive is parallel to forcing along a partial order. In \cite{Shelah2004}, Shelah described a more general mechanism that allows one to deal with certain schemes which might be ill--founded or non-transitive.

We describe the general method with respect to ${<}\kappa$-supports where $\kappa$ is either $\aleph_0$ (so, in this case we are referring to \emph{finite support}) or measurable. As the treatment of the basic properties of the forcing in both cases is very similar, we include both of them in this section. 

Let us define the notion of ${<}\kappa$-support iteration based on $\mathbb{D}$ uniformly for every partial order $\mathbb{D}$.

Let \(\mathbb{D}=(D,\leq_D)\) be a partial order and let $\kappa$ be a regular cardinal, 
a sequence \(\langle \mathbb P_{<d},\dot{\mathbb Q}_d : d\in D\rangle\)
is a \textit{${<}\kappa$-support iteration based on \(\mathbb{D}\)} if the following conditions hold for all \(d\in D\):
\begin{itemize}
    \item \(\mathbb P_{<d}\) is the set of ${<}\kappa$-supported sequences 
    $p=\langle p_e\mid e\in D_{<d}\rangle$ such that for all \(e\in D_{<d}\), $\Vdash_{\mathbb{P}_{<e}}p(e)\in \dot{\mathbb{Q}}_e$.
    \item \(\Vdash_{\mathbb P_{<d}} \dot{\mathbb Q}_d \text{ is a partial order}\).
\end{itemize}
We denote by \(\mathbb P_{\mathbb{D}}\) 
the set of ${<}\kappa$-supported sequences 
$p=\langle p_e\mid e\in D\rangle$ such that for all \(e\in D\), $\Vdash_{\mathbb{P}_{<e}}p(e)\in \dot{\mathbb{Q}}_e$.
The order is defined recursively by $$p\leq_{\mathbb{P}_{\mathbb{D}}} q\text{ iff for every }d\in D, \  p\restriction D_{<d}\Vdash_{\mathbb{P}_{<d}}p(d)\leq_{\dot{\mathbb{Q}_d}}q(d).$$ 

We will also write
\(\mathbb P_{\leq d}\) instead of
\(\mathbb P_{<d} * \dot{\mathbb Q}_d\). 
Ultimately, we will only consider iterations based on well-founded partially ordered sets, as the names $\dot{\mathbb{Q}}_d$ will be defined by well-founded recursion.

\begin{lemma}\label{Lemma: projection by restriction}
    Let $\mathbb{D}'\s \mathbb{D}$ be downwards closed. Then $\mathbb{P}_{\mathbb D}$ projects via restriction to $\mathbb{P}_{\mathbb{D}'}.$
\end{lemma}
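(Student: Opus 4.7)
The plan is to verify that the restriction map
\[
\pi:\mathbb P_{\mathbb D}\to \mathbb P_{\mathbb D'},\qquad \pi(p)=p\restriction D',
\]
is a projection, i.e.\ that it is well-defined, order preserving, and that every refinement of $\pi(p)$ in $\mathbb P_{\mathbb D'}$ can be lifted through $\pi$ to a refinement of $p$.

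First I would observe that downward closure of $D'$ implies $D'_{<e}=D_{<e}$ for every $e\in D'$, so by the recursive definition of $\mathbb P_{<e}$ the poset $\mathbb P_{<e}$ computed inside $\mathbb P_{\mathbb D'}$ is literally the same object as the one computed inside $\mathbb P_{\mathbb D}$. From this, well-definedness and order-preservation of $\pi$ are immediate: if $p\in\mathbb P_{\mathbb D}$ and $e\in D'$, then $(p\restriction D')\restriction D'_{<e}=p\restriction D_{<e}$ forces $p(e)\in\dot{\mathbb Q}_e$, and finite support is obviously preserved under restriction. The same identification shows $p\le q$ in $\mathbb P_{\mathbb D}$ implies $\pi(p)\le\pi(q)$ in $\mathbb P_{\mathbb D'}$.

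The main step is the lifting. Given $p\in\mathbb P_{\mathbb D}$ and $q\le\pi(p)$ in $\mathbb P_{\mathbb D'}$, I would define the \emph{mixed condition}
\[
p^*(e)=\begin{cases} q(e) & e\in D',\\ p(e) & e\in D\setminus D',\end{cases}
\]
whose support is contained in $\operatorname{supp}(p)\cup\operatorname{supp}(q)$, hence finite. Clearly $\pi(p^*)=q$, so it remains to check that $p^*\in\mathbb P_{\mathbb D}$ and $p^*\le p$. I would prove by well-founded induction on $d\in D$ the conjunction
\[
p^*\restriction D_{<d}\in\mathbb P_{<d}\quad\text{and}\quad p^*\restriction D_{<d}\le_{\mathbb P_{<d}} p\restriction D_{<d}.
\]
In the inductive step there are two cases. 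If $d\in D'$, then $D_{<d}\subseteq D'$ by downward closure, so $p^*\restriction D_{<d}=q\restriction D'_{<d}$, which lies in $\mathbb P_{<d}$ and is $\le p\restriction D_{<d}=\pi(p)\restriction D'_{<d}$ by the hypothesis $q\le\pi(p)$; moreover $q\le\pi(p)$ gives $p^*\restriction D_{<d}\Vdash p^*(d)\le p(d)$ in $\dot{\mathbb Q}_d$. If $d\in D\setminus D'$, the induction hypothesis together with the recursive definition of $\mathbb P_{<d}$ yields that $p^*\restriction D_{<d}$ is a condition stronger than $p\restriction D_{<d}$, and since $p^*(d)=p(d)$ the clause at $d$ is automatic.

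The main obstacle I expect is exactly the subtlety of the inductive step at indices $d\in D\setminus D'$: the condition $p^*\restriction D_{<d}$ is a hybrid of $q$-coordinates and $p$-coordinates, and one has to ensure that its status as a member of $\mathbb P_{<d}$ does not rely on anything beyond what is already known for indices below $d$. This is precisely why the argument is packaged as a well-founded induction (legitimate since we are working with well-founded $\mathbb D$, as stipulated in the paragraph preceding the lemma), rather than as a one-shot verification. Once the induction closes, applying it to each $d$ simultaneously (and combining with finite support) yields $p^*\in\mathbb P_{\mathbb D}$ with $p^*\le p$ and $\pi(p^*)=q$, completing the proof that $\pi$ is a projection.
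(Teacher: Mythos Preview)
Your proof is correct and follows the same strategy as the paper: define the mixed condition $p^*=q\cup p\restriction(D\setminus D')$ and verify it is a condition extending $p$ with $\pi(p^*)=q$, splitting into the cases $d\in D'$ and $d\in D\setminus D'$. The paper's argument is terser---it asserts ``by definition, $p'$ belongs to $\mathbb P_{\mathbb D}$'' and then checks $p'\le p$ coordinatewise---whereas you package both verifications into a single well-founded induction, which is a legitimate (and arguably cleaner) way to handle the implicit circularity that showing $p^*\restriction D_{<d}\Vdash p^*(d)\in\dot{\mathbb Q}_d$ for $d\notin D'$ requires knowing $p^*\restriction D_{<d}\le p\restriction D_{<d}$.
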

\begin{proof}
    We will check that the restriction map from \(\mathbb P_{\mathbb D}\) to \(\mathbb P_{\mathbb D'}\) is a projection.
    Clearly, it is order preserving. 
    Suppose that $q\in \mathbb{P}_{\mathbb{D}'}$ is stronger than $p\restriction D'$. 
    We will find an extension \(p'\leq p\) in \(\mathbb P_{\mathbb D}\) such that \(p'\restriction D' = q\).
    
    Define $p'=q\cup p\restriction (D\setminus D')$. 
    Then by definition, $p'$ belongs to \(\mathbb P_{\mathbb D}\), and $p'\restriction D'=q$. It remains to prove that for every $d\in D$, 
   
    $$p'\restriction D_{<d}\Vdash_{\mathbb{P}_{<d}}p'(d) \leq p(d).$$ 
    
    If $d\in D'$, since $D'$ is downward closed, $p'\restriction D_{<d}=q\restriction D_{<d}$ and $p'(d)=q(d)$. Since $q$ is stronger than $p\restriction D'$, we are done. 
    
    Suppose instead that $d\in D\setminus D'$. Then $p'(d)=p(d)$, and clearly, $p'\restriction D_{<d}\Vdash_{\mathbb{P}_{<d}} p'(d)\leq p(d)$.
\end{proof}
As one can see from the proof, this projection is a \emph{forking projection} \cite{SigmaPrikry}, that is to say given a condition $p$ and a condition $r \leq p \restriction D'$, there is a weakest condition $q = r \cup (p\restriction D\setminus D')$ which extends $p$ and project to $r$.

Recall that the \textit{Mathias forcing relative to a filter} is defined as follows:
\begin{definition}\label{Def:Mathias}
    Let $F$ be a $\kappa$-complete filter over a regular cardinal $\kappa\geq\omega$. Let $\mathbb{M}(F)$ be the forcing notion consisting of conditions of the form $(a,A)\in [\kappa]^{<\kappa}\times F$. The order is defined by $(a,A)\leq (b,B)$ if and only if
    \begin{enumerate}
        \item $b\sqsubseteq a$.
        \item $A\subseteq B$.
        \item $a\setminus b\subseteq B$
    \end{enumerate}
\end{definition}
This forcing is $\kappa$-closed, and $(\kappa^{<\kappa})^+$-cc. It adds a set $X$ which is a $\subseteq^*$-lower bound for the filter $F$. Note that $\mathbb{M}(F)$ is \textit{canonically} $\kappa$-directed closed, namely, every directed set of conditions of size ${<}\kappa$ has a greatest lower bound.  Indeed, for a directed set of conditions $\langle s_i, A_i\rangle$, their greatest lower bound is $\langle \bigcup s_i, \bigcap A_i\rangle$. 

\begin{lemma}\label{lemma:basic-properties}
Let $\mathbb{D}$ be well-founded, 
and let $\mathbb{P}_{\mathbb{D}}$ be ${<}\kappa$-support iteration based on $\mathbb{D}$ such that for every $d\in \mathbb{D}$, $\Vdash_{\mathbb{P}_{<d}} \dot{\mathbb{Q}}_d$ is the Mathias forcing relative to a $\kappa$-complete filter $\dot{F}_d$. Then:
\begin{enumerate}
    \item $\mathbb{P}_{\mathbb{D}}$ has a dense subset of $\mathbb{P}_{\mathbb{D}}$ of conditions $p$ such that for every $d\in \dom p$, $p(d) = \langle \check s, \dot{A}\rangle$ (namely, the stem is a canonical name).
    \item $\mathbb{P}_{\mathbb{D}}$ is canonically $\kappa$-directed closed.
    \item $\mathbb{P}_{\mathbb{D}}$ is $(\kappa^{<\kappa})^+$-Knaster and in particular $(\kappa^{<\kappa})^+$-cc.
\end{enumerate}
\end{lemma}

\begin{proof}  
    Let us prove item $(1)$ by induction on $\mathbb{D}_{+}= \mathbb{D} \cup \{\infty\}$, where $\infty$ is defined to be above all elements of $\mathbb{D}$. 
    
    Let $d\in \mathbb{D}_{+}$ and let us assume that for every $e < d$ the lemma holds for $\mathbb{P}_{<e}$.  Let $p \in \mathbb{P}_{<d}$, and let $s = \supp p = \langle e_i \mid i < i_*\rangle$, $i_* < \kappa$. Let us assume that we pick the enumeration so that $e_i <_{\mathbb{D}} e_j$ implies $i < j$.

    Define a sequence of conditions $\langle p_i \mid i < i_* + 1\rangle$ in $\mathbb{P}_{<d}$. Let $p_0 = p$ and for every $i$, let $p_{i+1} \leq p_i$ such that 
    \begin{itemize}
        \item    $p_{i+1} \restriction \mathbb{D}_{<e_i}$ decides the stem of $p_i(e_i)$, 
        \item for every $e' \in \dom p_{i+1} \cap \mathbb{D}_{<e_i}$, the stem of $p_{i+1}(e')$ is a canonical name for a ground model set, and 
        \item \(p_{i+1} \restriction (\mathbb{D}_{<d}\setminus \mathbb{D}_{<e_i}) = p_{i} \restriction (\mathbb{D}_{<d}\setminus \mathbb{D}_{<e_i}).\)
    \end{itemize}
    The second requirement is possible by the inductive hypothesis.

    For a non-zero limit ordinal $i \leq i_*$, let $\supp p_i = \bigcup_{j < i} \supp p_j$ and for every $e \in \supp p_i$, let $p_i(e)$ be the greatest lower bound of $\langle p_j(e) \mid j < i,\, e \in \dom p_j\rangle$. By the above remark, we can take the stem of $p_i(e)$  to be the canonical name for the union of the stems. 

    It is straightforward to verify that $p_i$ is indeed a condition which is stronger than $p_j$ for all $j < i$.
 Note that at step $i$, for every $e \in \supp p_i$ such that $e \neq e_k$ for $k \geq i$, the stem of $p_i(e)$ is a canonical name for a ground model set, as wanted.

For $(2)$, note that  for every $d\in\mathbb{D}$, $\dot{\mathbb Q}_d$ is forced by $\mathbb{P}_{<d}$ to be canonically $\kappa$-directed closed and so a canonical lower bound can be defined coordinate-wise.

Let us prove $(3)$.    Let $\mathcal{A} \subseteq \mathbb{P}_{\mathbb{D}}$ be a collection of conditions of cardinality $(\kappa^{<\kappa})^+$. Using the first part of the lemma, we may assume that for every $p\in \mathcal{A}$, and $e\in \supp p$, the stem of $p(e)$ is a canonical name.

    Since the support of the the conditions is of size $<\kappa$, there is $\mathcal{S}\subseteq \mathcal{A}$ of cardinality $(\kappa^{<\kappa})^+$ such that $\langle \supp p \mid p \in \mathcal{S}\rangle$ form a $\Delta$-system. Let $R$ be the root of the system. 
    By shrinking $\mathcal{S}$, we may assume that there is a function $s\colon R \to [\kappa]^{<\kappa}$ such that for every $p\in\mathcal{S}$, and $e\in R$, $p(e) = \langle \check{s(e)}, \dot{A}^p_e\rangle$. 
    We claim that every pair of conditions $p, q \in \mathcal{S}$ is compatible.  To see this
    let 
    \[r(e) = \begin{cases}
    p(e) & e \in \supp p \setminus R \\
    q(e) & e \in \supp q \setminus R \\
    \langle \check {s(e)}, \dot{A}^p_e \cap \dot{A}^q_e\rangle & e \in R.
    \end{cases}\]

As each $\dot{F}_e$ is forced to be a filter, this is a condition stronger than $p$ and $q$.
The same argument shows that any subset of $\mathcal{S}$ of cardinality less than $\kappa$ has a lower bound. 
\end{proof}

As we will see later, the main challenge is to pick the filters $\dot{F}_e$ in a way that will allow us to isolate in the generic extension an ultrafilter  which is generated by the Mathias generic sets. 

Given an iteration $\mathbb{P}_{\mathbb{D}}$ of Mathias forcing, a generic filter $G\subseteq \mathbb{P}_\mathbb{D}$, and $d\in \mathbb{D}$, let $$G_{<d} = \{p\restriction D_{<d} \mid p \in G\}.$$ By Lemma \ref{Lemma: projection by restriction}, it is a generic filter for $\mathbb{P}_{<d}$. 
Let $G_d = \{p(d)^{G_{<d}} \mid p \in G\} \subseteq \mathbb Q_d$, which is generic over $V[G_{<d}]$. Let $B_d = \bigcup \{s \mid \langle s, A\rangle \in G_d\}$  and fix a canonical name $\dot{B}_d$ for $B_d$.

\begin{lemma}\label{lemma:incompatibility}
    Let $\mathbb{P}_{\mathbb{D}}$ be a ${<}\kappa$-support iteration based on $\mathbb{D}$ of Mathias forcings relative to filters and let $G\subseteq \mathbb{P}_{\mathbb{D}}$ be a generic filter. In $V[G]$, for all $e, d\in \mathbb{D}$ such that $e\not\leq d$,  $B_d\nsubseteq^*  B_e$.
\end{lemma}
\begin{proof}
    Assume otherwise and let $p\in\mathbb{P}_{\mathbb{D}}$ force that $\dot{B}_d \setminus \dot{B}_e$ is bounded by some $\alpha < \kappa$. 
    Let us look at $p(d) = \langle \check s_d, \dot{A}_d\rangle$ and $p(e) = \langle \check s_e, \dot{A}_e\rangle$. 
    As $\dot{A}_d$ is a $\mathbb{P}_{<d}$-name for an unbounded subset of $\kappa$, there is an extension $r \leq p \restriction \mathbb{D}_{<d}$ in $\mathbb{P}_{<d}$ such that for some $\gamma > \alpha, \sup(s_e)$, \  $r\Vdash\gamma \in \dot{A}_d$.  

    Let $\dot{A}'_e$ be a $\mathbb{P}_{<e}$-name forced to be $\dot{A}_e\setminus\check{\gamma+1}$ and let $s'_d=s_d\cup\{\gamma\}$.
    Let $q\leq p$ be the following condition:
    \[q(c) = \begin{cases}
        r(c) & c < d \\
        \langle \check{s}'_d, \dot{A}_d\rangle & c = d \\
        \langle \check{s}_e, \dot{A}'_e\rangle & c = e \\
        p(c) & else.
    \end{cases}\]
    Then, $q \leq p$ and $q \Vdash \check\gamma \in \dot{B}_d \setminus \dot{B}_e$, contradiction.
\end{proof}

\begin{corollary}\label{cor:diagonalizing-the-ultrafilter}
    Let $\mathbb{P}_{\mathbb{D}}$ be as above and let  $\mathbb{E} \subseteq \mathbb{D}$ such that
    \begin{align}\label{condition}
     \text{for every }e_0,e_1 \in \mathbb{E}, \text{ if }e_0 <_E e_1\text{ then }\Vdash_{\mathbb{P}_{<e_1}} \dot{B}_{e_0} \in \dot{F}_{e_1}.\tag{$\dagger$}\end{align}
    Then, in the generic extension, $(\langle B_e \mid e \in \mathbb{E}\rangle, \supset^*\rangle\cong \mathbb E$.
\end{corollary}
\begin{proof}
    Let $e_0, e_1 \in \mathbb{E}$. If $e_0 <_E e_1$ then by (\ref{condition}), and properties of $\mathbb{M}(\dot{F}_{e_1})$, $B_{e_0} \supset^* B_{e_1}$. Otherwise, by Lemma \ref{lemma:incompatibility}, $B_{e_1} \not\subseteq^* B_{e_0}$.
\end{proof}

Thus, the main task which is left is to show how the names $\dot{F}_d$ can be chosen so that the requirements of the corollary hold for a prescribed $\mathbb{E}$ and simultaneously generate an ultrafilter. In the next section we will formally define the iteration by stating precisely how the filters $\dot{F}_d$ are chosen. 



\section{Mathias iteration}\label{Section: Mathias iteration}
The goal of this section is to define for a given well-founded, $\kappa^+$-directed partial order $\mathbb{D}$ a forcing notion that introduces an ultrafilter with a basis isomorphic to $\mathbb{D}$. We will be able to show that this forcing works for the cases $\kappa = \omega$ and $\kappa$ is indestructible supercompact.

Let $\mathbb{D}$ be $\kappa^+$-directed well-founded partial order. Find an ordinal $\mu$ with $|\mu|=|\mathbb{D}|$ and a linearization $\l d_\alpha\mid \alpha<\mu\r$ of $\mathbb{D}$. Namely, $\l d_\alpha\mid \alpha<\mu\r$ enumerates $\mathbb{D}$ and
\begin{align}\label{linearization condition}
    \tag{$\triangle$}\forall i,j<\mu, \ d_i\leq_{\mathbb{D}}d_j\Rightarrow i\leq j.
\end{align} In this situation we say that \textit{$\mu$ is a possible order type of $\mathbb{D}$}. Note that by the $\kappa^+$-directedness if $\mu$ is a possible order type of $\mathbb{D}$, $\cf(\mu)>\kappa$. Let $\mathbb{D}^+=\mu^+\times\mathbb{D}$ with the usual product strict ordering, namely $(\alpha, d) < (\beta, e)$ if and only if $\alpha < \beta$ and $d < e$.  Note that $<$ is not the strict part of the product partial ordering
$(\mu^+, \le) \times (\mathbb{D}, \leq_{\mathbb{D}})$.

Let us assume further that the diamond principle $\diamondsuit_{\mu^+}(\mu^+ \cap \cof(\mu))$
holds in $V$. Here $\cof(\mu)$ denotes the class of ordinals $\alpha$ with $\cf(\alpha)=\cf(\mu)$. 
This form of diamond principle does not follow in general from the assumption that $2^\mu = \mu^+$,
and in the sequel we will arrange it by forcing.
We will define a non-linear iteration $\mathbb{P}=\mathbb{P}_{\mathbb{D}^+}$ of filter Mathias
forcing indexed by $\mathbb{D}^+$, so by Lemma \ref{lemma:basic-properties} 
 $\mathbb{P}$ is
$\kappa$-directed closed and $\kappa^+$-cc.


We will use a form of diamond which is adapted to guessing subsets of $H_{\mu^+}$, which
is possible because $\vert H_{\mu^+} \vert = \mu^+$. We will assume without loss of generality that $\mathbb{D}\in H_{\mu^+}$. It follows from $\diamondsuit_{\mu^+}(\mu^+ \cap \cof(\mu))$
  there are sequences $(A_\alpha)_{\alpha \in \mu^+ \cap \cof(\mu)}$
and $(M_\alpha)_{\alpha < \mu^+}$ such that 
\begin{enumerate}
\item $M_\alpha \prec H_{\mu^+}$ with $\vert M_\alpha \vert =
\mu$ and $M_\alpha \cap \mu^+ \in \mu^+$.
\item $(M_\alpha)_{\alpha < \mu^+}$ is continuous and $\bigcup_{\alpha < \mu^+} M_\alpha = H_{\mu^+}$.
\item $\mathbb{D}\in M_0$ and $\mathbb{D}\cup\mu+1\subseteq M_0$. 
\item $A_\alpha \subseteq M_\alpha$, and for every $A \subseteq H_{\mu^+}$ there are stationarily
  many $\alpha \in \mu^+ \cap \cof(\mu)$ such that $A \cap M_\alpha = A_\alpha$.
\end{enumerate}
The choice of the sequence $(M_\alpha)_{\alpha<\mu^+}$ is not important, since such sequences always exist
and any two such sequences agree on a  club subset of $\mu^+$. Also note that $M_\alpha\cap \mathbb{D}^+=(M_\alpha\cap\mu^+)\times\mathbb{D}$


Before we continue to the definition of $\mathbb{P}$, let us set  up some terminology. Let $\mathbb{Q}$ be a forcing poset and let $\lambda$ be a cardinal. 
It will be convenient to specify exactly what we mean by a {\em canonical $\mathbb{Q}$-name} for a subset of $\lambda$ or a  family of subsets
of $\lambda$. A canonical $\mathbb{Q}$-name for a subset of $\lambda$ is a name $\dot \tau$ such that
$\dot \tau \subseteq \mathbb{Q} \times \{ \check \alpha: \alpha < \lambda \}$, and
$\{ q : (q, \check \alpha) \in \dot \tau \}$ is an antichain for all $\alpha$. 
A canonical name for a family of subsets of $\lambda$ is a name $\dot \sigma$ such that
$\dot \sigma \subseteq \mathbb Q \times \{ \dot \tau : \mbox{$\dot\tau$ is a canonical name for a subset of $\lambda$} \}$,
and $\{ q : (q, \dot \tau) \in \dot \sigma \}$ is an antichain for all $\dot \tau$. 
It is easy to see that any name for a subset of $\lambda$ or a family of subsets of $\lambda$ is equivalent to a canonical name.

As usual, ${\mathbb P}_{<a}$ is defined by recursion on $a\in\mathbb{D}^+$, with a minor modification that we are going to describe.  
As discussed above, the main part of the definition is choosing a ${\mathbb P}_{<a}$-name $\dot F_a$ for a $\kappa$-complete uniform filter on $\kappa$ in $V^{\mathbb{P}_{<a}}$.
\begin{definition}
     $\dot F_a$ is defined in $V$ by induction on $\beta < \mu^+$, and an subsidiary induction on $a \in \{\beta\}\times\mathbb{D}$.
Assume that $\mathbb{P}_{\beta\times \mathbb{D}}$ was defined. 

If $\beta \in \mu^+ \cap \cof(\mu)$
and $A_\beta$ is a canonical ${\mathbb P}_{\beta\times\mathbb{D}}$-name for a uniform $\kappa$-complete ultrafilter on $\kappa$ (abbreviated by ``$\beta$ is an {\em active stage}"), we set for every $a=(\beta,d) \in \{\beta\}\times\mathbb{D}$, \begin{align*}
\dot{F}_a:=\{(1,\dot A)\colon \dot{A}\text{ is a canonical
 }&{\mathbb P}_{\beta\times\mathbb{D}_{<d}}\text{-name} \\
 &\text{ for a subset of $\kappa$ and }\Vdash^V_{ {\mathbb P}_{\beta\times \mathbb{D}} } \dot A \in A_\beta\}.    
\end{align*}
If $\beta$ is not an active stage let $\dot{F}_a$ name the tail filter on $\kappa$. 
\end{definition}

The forcing expression $\Vdash^V_{ {\mathbb P}_{\beta\times \mathbb{D}} } \dot A \in A_\beta$ is reasonable because $\beta\times\mathbb{D}_{<d}$ is a downwards
closed subset of $\beta\times\mathbb{D}$, so that
$\dot A$ makes sense as a ${\mathbb P}_{\beta\times\mathbb{D}}$-name for a subset of $\kappa$. 
It is easy to verify that in either case $\dot F_a$ is a ${\mathbb P}_{\beta\times\mathbb{D}_{<d}}$-name
for a $\kappa$-complete filter on $\kappa$, where we note that 
${\mathbb P}_{\beta\times\mathbb{D}_{<d}} = {\mathbb P}_{<a}$.

 Recall that for a $V$-generic filter $G\subseteq\mathbb{P}$, for any $a\in \mathbb{D}^+$, $G_{<a}$ is the induced generic for $\mathbb{P}_{<a}$. For $\alpha<\mu^+$ we denote by $G_\alpha$ the induced generic for $\mathbb{P}_{\alpha\times \mathbb{D}}$.
 In the next remark, we record a few easy observations, where we make repeated use of the $\kappa^+$-cc and the regularity of $\mu^+$ to
analyze how subsets of $\kappa$ will appear in the generic extension.
\begin{remark}\label{remark: directedness} \ {}
\begin{enumerate}
\item [(I)] Every canonical $\mathbb P$-name for a subset of $\kappa$ is a canonical ${\mathbb P}_a$-name for some $a \in \mathbb{D}^+$.
  It follows that if $G$ is $\mathbb P$-generic then
  $$P(\kappa) \cap V[G] = \bigcup_{a \in \mathbb{D}^+} P(\kappa) \cap  V[G_{<a}]$$ where $G_{<a}$ is the
  projection of $G$ to ${\mathbb P}_{<a}$.
\item [(II)] Similarly,
\begin{enumerate}
    \item [(IIa)] if  $\beta < \mu^+$ and $\cf(\beta) > \kappa$, then
  \begin{align*}
  P(\kappa) \cap V[G_{\beta}] = \bigcup_{\alpha<\beta} P(\kappa) \cap
  V[G_{\alpha}].  \end{align*}
  \item [(IIb)] for any $\beta<\mu^+$, \begin{align*}P(\kappa)\cap V[G_\beta]=\bigcup_{d\in \mathbb{D}}P(\kappa)\cap V[G_{<(\beta,d)}]\end{align*}
 In particular, if $\beta$ is an active stage, then
 \begin{align*}
A_\beta[G_\beta]=\bigcup_{d\in\mathbb{D}}\dot{F}_{(\beta,d)}[G_{<(\beta,d)}].
 \end{align*}
   
   \end{enumerate}
\end{enumerate} 
\end{remark}
Let $\dot U$ be a canonical $\mathbb{P}$-name for a $\kappa$-complete ultrafilter over $\kappa$, and for
 $\beta < \mu^+$ consider the following $\mathbb{P}_{\beta\times \mathbb{D}}$-name
 \begin{align*}
     \dot U_\beta = \{ (q, \dot \tau) \in \dot{U}:&
 q \in {\mathbb P}_{\beta\times\mathbb{D}}\text{ and}\\
 & \dot \tau\text{ is a canonical
   }{\mathbb P}_{\beta\times\mathbb{D}}\text{-name for a subset of }\kappa\}
 \end{align*}
 Then:
 \begin{itemize}
 \item $\dot U \subseteq H_{\mu^+}$.
 \item $\dot U = \bigcup_{\beta < \mu^+} {\dot U}_\beta$, and
   $\langle\dot U_\beta\mid \beta<\mu^+\rangle$ is continuous at points of cofinality greater than $\kappa$.
 \item For all but non-stationary many $\beta \in \mu^+ \cap \cof(\mu)$,
   $\dot U_\beta=\dot{U}\cap M_\beta$ is a name for a $\kappa$-complete ultraiflter on $\kappa$,
   and is forced to equal $\dot{U} \cap V[\dot{G}_{\beta\times\mathbb{D}}]$. Denote the set of such $\beta$ by $S_{\dot{U}}$.

 \end{itemize}
Consider the following crucial set 
$$S^*_{\dot{U}} := \{ \beta \in S_{\dot{U}} : A_\beta = \dot U_\beta \},$$ then by the diamond assumption,
   $S^*_{\dot{U}}$ is a stationary  subset of $\mu^+ \cap \cof(\mu)$, and clearly every $\beta \in S^*_{\dot{U}}$ is an ``active stage'' in the
   definition of $\mathbb P$. Note that for every $\beta\in S^*_{\dot{U}}$:
   \begin{align}\label{equation: S points}
   \dot{U}[G]\cap V[G_{\beta}]=\dot{U}_\beta[G_{\beta}]=A_\beta[G_{\beta}]\in V[G_{\beta}].\tag{$\star$}
   \end{align}

\begin{claim}\label{claim: union of F}
     Let $\dot{U}$ be a canonical $\mathbb{P}$-name for a $\kappa$-complete ultrafilter over $\kappa$, let $\alpha\in S^*_{\dot{U}}$, and $G_\alpha\subseteq\mathbb{P}_{\alpha\times\mathbb{D}}$ be $V$-generic. Then:
     \begin{enumerate}
         \item For all $\beta\in S^*_{\dot{U}}\cap\alpha$, $\dot{U}_\alpha[G_\alpha]\cap V[G_\beta]=\dot{U}_\beta[G_\beta]$
     \item $\dot{U}_\alpha[G_\alpha]=\bigcup_{d\in\mathbb{D}}\dot{F}_{(\alpha,d)}[G_{<(\alpha,d)}]$.
     \end{enumerate}
\end{claim}
\begin{proof}
$(1)$ follows straightforwardly from (\ref{equation: S points}). For $(2)$,
    by the definition of $\dot{F}_{\alpha,d}$, and since $\dot{U}_\alpha=A_\alpha$, the inclusion from right to left is immediate. Let $x  = {\dot x}[G_\alpha] \in \dot{U}_\alpha[G_\alpha]$, let $p \in G_\alpha$ force
    $\dot x \in {\dot U}_\alpha$. Using the maximum principle and the fact that any name is forced
    to be equal to a canonical name, we find a canonical ${\mathbb P}_{\alpha \times \mathbb{D}}$
    name $\dot y$ such that for any generic $H$, 
    ${\dot y}[H] = {\dot x}[H]$ when $p \in H$ and ${\dot y}[H] = \kappa$ when $p \notin H$. 
    Then $x = {\dot y}[G_\alpha]$ and 
     $\Vdash_{\mathbb{P}_{\alpha\times\mathbb{D}}}\dot{y}\in \dot{U}_\alpha$. By the $\kappa^+$-cc and the $\kappa^+$-directedness of $\mathbb{D}$, we can find $d\in \mathbb{D}$ such that  $\dot{y}$ is a $\mathbb{P}_{<(\alpha,d)}$-name. It follows that $x\in \dot{F}_{(\alpha,d)}[G_{<(\alpha,d)}]$.
\end{proof}
The following lemma gives a sufficient condition for the main theorem: 
\begin{lemma}\label{Lemma: aux}
    Let $\dot{U}$ be a canonical $\mathbb{P}$-name for a $\kappa$-complete ultrafilter over $\kappa$ and let $I$ be a subset of $\mu^+$  such that $\{B_{(\gamma,d)} \mid \gamma\in I, d\in\mathbb{D}\}$ generates $\dot{U}[G]$ for some $V$-generic $G\subseteq \mathbb{P}$. Then in $V$ there is a club $C\subseteq \mu^+$, such that for every $\beta\in S^*_{\dot{U}}\cap C$: 
    \begin{enumerate}
        \item  $\{B_{(\gamma,d)} \mid \gamma\in I\cap\beta, d\in\mathbb{D}\}$ generates $\dot{U}_\beta[G_\beta]$.
        \item If $I\times \mathbb{D}$ satisfies (\ref{condition}) of Corollary \ref{cor:diagonalizing-the-ultrafilter}, and there is an unbounded $I^*\subseteq I\cap \beta$ with $\text{otp}(I^*)=\mu$, then the ultrafilter $\dot{U}_\beta[G_\beta]$ is a simple $\mathbb{P}_{\mathbb{D}}$-point.
    \end{enumerate} 
\end{lemma}

\begin{proof}
    For each $\alpha<\mu^+$, let $\beta_\alpha<\mu^+$ be the least such that for every $x\in \dot{U}[G]\cap V[G_\alpha]$ there is $\gamma\in \beta_\alpha\cap I$ and $d\in\mathbb{D}$ such that $B_{(\gamma,d)}\subseteq^* x$. Such a $\beta_\alpha$ exists since in $V[G_\alpha]$, $2^\kappa\leq \mu$. Let $C\subseteq \mu^+$ be the club of closure points of $\alpha\mapsto \beta_\alpha$. By the $\kappa^+$-cc of the forcing, we may assume that $C\in V$. To see $(1)$, let $\beta\in S^*_{\dot{U}}\cap C$,  $x\in \dot{U}_\beta[G_{\beta}]$. By Remark \ref{remark: directedness}(II), there is $\alpha \in S^*_{\dot{U}}\cap\beta$  such that $x\in \dot{U}_\alpha[G_\alpha]$. Since $\beta\in C$, $\beta_\alpha<\beta$ and therefore there is $\gamma\in I\cap \beta$ and $d\in \mathbb{D}$ such that $B_{(\gamma,d)}\subseteq^* x$. 

For $(2)$, recall that $\langle d_\alpha \mid \alpha < \mu\rangle$ is a linearization of $\mathbb{D}$ and
        let $\langle \epsilon_\alpha \mid \alpha < \mu\rangle$ be the increasing enumeration of the members of $I^*$. By Corollary \ref{cor:diagonalizing-the-ultrafilter}, for every $\alpha,\alpha' < \mu$, and $a,b\in \mathbb{D}$, $B_{\epsilon_\alpha, a} \subset^* B_{\epsilon_{\alpha'}, b}$ if and only if ${\alpha'} <\alpha$ and $b <_{\mathbb{D}} a$. Let 
        \[\mathcal{B} = \{B_{(\epsilon_\alpha, d_\alpha)} \mid \alpha < \mu\}.\]
        Then $(\mathcal{B},\supset^*)\simeq (\{(\epsilon_\alpha,d_\alpha)\mid \alpha<\mu\},<_{\mathbb{D}^+})\simeq\mathbb{D}$, where the right isomorphism is given by $\varphi(\epsilon,d)=d$. While this is clearly a bijection, let us varify that is is order preserving: let $\alpha,{\alpha'}<\mu$. Then \begin{align*}
            (\epsilon_{\alpha'},d_{\alpha'})<(\epsilon_\alpha,d_\alpha)&\text{ iff }(\epsilon_{\alpha'}<\epsilon_\alpha\text{ and }d_{\alpha'}<_{\mathbb{D}}d_\alpha) \\
            &\text{ iff }({\alpha'}<\alpha \text{ and }d_{\alpha'}<_{\mathbb{D}}d_\alpha) &
             \epsilon_i\text{'s are increasing}\\
            &\text{ iff }d_{\alpha'}<_{\mathbb{D}}d_\alpha &\text{by condition (\ref{linearization condition}) }\\
            &\text{ iff } \varphi(\epsilon_{\alpha'},d_{\alpha'})<\varphi(\epsilon_\alpha,d_\alpha)
        \end{align*}
        
        To see $\mathcal{B}$ is a base for
        $\dot{U}_\beta[G_\beta]$, it is enough to verify that the elements of the  base described in $(1)$ were generated by $\mathcal{B}$. 
        Let $(\gamma, d)\in (I\cap\beta)\times\mathbb{D}$ be arbitrary. Find $\alpha$ such that $\epsilon_\alpha>\gamma$ (which exists since $I^*$ is unbounded) and let $\alpha'$ be such that $d=d_{\alpha'}$. Let $\alpha^* > \alpha, \alpha'$ such that $d <_{\mathbb{D}} d_{\alpha^*}$. Hence, $B_{(\epsilon_{\alpha^*}, d_{\alpha^*})}$ is contained in $B_{\gamma, d}$. 
    \end{proof}
\begin{remark}\label{remark: additional}
    Note that if we replace the assumption in Lemma \ref{Lemma: aux} with $I\subseteq S^*_{\dot{U}}$ and (\ref{condition}), then Lemma \ref{Lemma: aux} holds for $\beta\in S^*_{\dot{U}}\cap \lim(I)$. Indeed, the argument for $(2)$ is identical. To see $(1)$, first argue that $\{B_{(\gamma,d)} \mid \gamma\in I\cap \beta, d\in\mathbb{D}\}\subseteq\dot{U}_\beta[G_\beta]$: using (\ref{condition}), if $\gamma\in I\cap \beta$, then for all $d\in\mathbb{D}$, $B_{(\gamma,d)}\in \dot{F}_{\beta,d'}[G_{<(\beta,d')}]$ for any $d'>d$. Since $\beta\in S^*_{\dot{U}}$, Claim \ref{claim: union of F} ensures that $\dot{F}_{\beta,d'}[G_{<(\beta,d')}]\subseteq\dot{U}_\beta[G_{\beta}]$.  Conversely, if $x\in \dot{U}_\beta[G_\beta]$ we use Claim \ref{claim: union of F} again to see that $x\in \dot{F}_{(\alpha,d)}[G_{<(\alpha,d)}]$ for some $\alpha\in I\cap \beta$ and $d\in\mathbb{D}$. Thus  $B_{(\alpha, d)} \subseteq^* x$.

\end{remark}
  The point  of Lemma \ref{Lemma: aux} is that in order to finish the argument, it suffices to find an ultrafilter $U$ in the generic extension such that there are generic Mathias sets $B_a$ belonging to $U$ for sufficiently many $a\in\mathbb{D}^+$ such that condition (\ref{condition}) is satisfied.

 The ultimate goal will be to have enough copies of $\mathbb{D}$ (i.e. fibers of $\mathbb{D}^+$) so that cofinally we can get simultaneously a copy of $\mathbb{D}$ which is cofinal. In order to obtain it, we will use a different argument for the countable case and the supercompact case.
\subsection{At a measurable cardinal}\label{Section at a measur}
Here we will prove the main consistency result for measurable cardinals.
\begin{theorem}\label{Cor: Simple P-point measurable}
    Let $\kappa$ be supercompact cardinal. Then for every well-founded, $\kappa^+$-directed set $\mathbb{D}$ with a possible order-type $\mu$, there is a \begin{align*} 
    \text{(}\kappa\text{-cc) }\ast\text{ (}\mu^+\text{-directed closed) }\ast\text{ (}\kappa\text{-directed closed+}\kappa^+\text{-cc)} \end{align*}forcing extension in which there is a simple $P_{\mathbb{D}}$ ultrafilter over $\kappa$.
\end{theorem}
To illustrate the main idea in a simple setting, suppose that $\kappa$ is a
Laver indestructible supercompact cardinal and consider a linear iteration
of ultrafilter Mathias forcing at $\kappa$ with ${<}\kappa$-supports,
where at stage $\alpha$ we use a measure $U_\alpha \in V[G_\alpha]$ to define
the next step in the iteration: there is no problem in choosing
such $U_\alpha$ because the forcing so far is $\kappa$-directed closed.
In the final model $V[G]$ there is of course a measure $U$, but
{\em a priori} we have no reason to believe that the set $B_\alpha$ added at stage
$\alpha$ is in $U$, or even that $U_\alpha \subseteq U$.

This problem was first solved by D{\v z}amonja and Shelah \cite{DzSh},
where the rough idea is that we force at $\alpha$ with the disjoint 
sum of all measures which exist in $V[G_\alpha]$. This leads to some
technical complications in the  iteration, which we can avoid
using the diamond machinery we used to define the iteration which traces back to Magidor. The key feature of the construction of $U$ is to ensure that whenever $U_\alpha \subseteq U$ (which happens frequently)
we have $B_{(\alpha,d)} \in U$ for every $d\in\mathbb{D}$. Running the iteration for a large enough number of steps, we may  produce
 measures which are generated by arbitrarily long $\subseteq^*$-decreasing sequences.  
The diamond idea appears in a paper by Cummings, D{\v z}amonja, Magidor,  Morgan and Shelah \cite{5authors}
albeit in a more complex setting.


Moving on towards the proof of \ref{Cor: Simple P-point measurable}, 
we start with a model $V_0$ in which $\kappa$ is supercompact
and $\mathbb{D}$ is a suitable directed set with a linearisation $\langle d_\alpha: \alpha < \mu \rangle$.
We start by forcing with
the Laver preparation $\mathbb{L}$ from \cite{Laver1978} which makes $\kappa$ indestructibly supercompact.  Next, we let $\mathbb{Q}$ be the standard forcing to add a $\diamondsuit(\mu^+\cap cf(\mu))$-sequence by initial segments. We note that this forcing is $\mu^+$-directed closed and if $2^\mu>\mu^+$, it collapses $2^\mu$ to $\mu^+$. We obtain a model\begin{align*}
    V \models & ``\kappa \mbox{ is supercompact} 
     \mbox{ indestructible under }\\ &\kappa\mbox{-directed closed forcing}+\diamondsuit(\mu^+\cap\cof(\mu))".
\end{align*}
Working in $V$ we let $\mathbb P = {\mathbb P}_{{\mathbb D}^+}$.

We will prove:
\begin{theorem} \label{msble} 
  If $G$ is $\mathbb P$-generic then there exist in $V[G]$ a normal measure $U$ on
  $\kappa$, such that a canonical name $\dot{U}$ for $U$ will have  the set $S^*_{\dot{U}}\times \mathbb{D}$ satisfy (\ref{condition}).
\end{theorem} 
\begin{proof}[Proof of Theorem \ref{msble}]
Working in $V_0$ we fix $j: V_0 \rightarrow M_0$ such that $j$ witnesses that $\kappa$ is $\mu^+$-supercompact 
and $j(\mathbb L) \simeq {\mathbb L} * {\mathbb P} * {\mathbb R}$ where
 the first point in the support of the tail iteration $\mathbb R$ is greater than $\mu^+$. 
 This ensures that:
 \begin{enumerate}
 \item $\Vdash^{M_0}_{ {\mathbb L} * {\mathbb P} } \mbox{``$\mathbb R * j(\mathbb P)$ is $\mu^{++}$-closed''}$.
 \item  $V[G] \models {}^{\mu^+} M_0[L][G] \subseteq M_0[L][G]$
 \item $V[G] \models \mbox{``$\mathbb R * j(\mathbb P)$ is $\mu^{++}$-closed''}$.
 \end{enumerate}

 It is straightforward
 to lift $j: V_0 \rightarrow M_0$ to obtain
 $j: V = V_0[L] \rightarrow M_0[L * G * H]$ for any $H$ which is $\mathbb R$-generic  
 over $V[G]$. To further lift $j$ onto $V[G]$ 
 we would need a $j({\mathbb P})$-generic object $S$ such that
 $j[G] \subseteq S$. It is worth noting that, since conditions
 in ${\mathbb P}$ have supports of size less than $\kappa$,
 the support of a condition in $j[G]$ will be contained in
 $j[\mathbb{D}^+]$.

 Let $\prec$  be the ``lexicographic" ordering of $\mathbb{D}^+$
 given by $(\epsilon, d_\alpha) \prec (\zeta, d_\beta)$
 if and only if eithe r $\epsilon < \zeta$ or
 $\epsilon = \zeta$ and $\alpha < \beta$. 
 It is easy to see that:
 \begin{itemize}
 \item $\prec$ is  a  well-ordering of $\mathbb{D}^+$ with order type $\mu^+$.
 \item $\prec$ extends $<_{ {\mathbb D}^+}$. 
 \end{itemize}
 Denote by $\{e_\zeta\mid \zeta<\mu^+\}$ the enumeration of $\mathbb{D}^+$ in $\prec$-increasing order.

 Working in $V[G]$, we will construct a decreasing sequence
 $(r_i, s_i)_{i < \mu^+}$ of conditions in ${\mathbb R} * j(\mathbb P)$.
 We will arrange that:
 \begin{itemize}
 \item The support of $s_i$ is contained in
   $j( \{ e_\eta : \eta < i \})$, so that in particular
   $j(e_\zeta)$ is not in the support of $s_i$ for $\zeta \ge i$. 
 \item $s_i \le j[G_{<e_i}]$, so that 
   $(r_i, s_i)$ forces that $j$ can be lifted onto  $V[G_{<e_i}]$.
 \end{itemize}
 We note that $s_i$ may not actually be a condition in $j({\mathbb P}_{<e_i})$,
 the point is that the projection of $s_i$ to $j({\mathbb P}_{<e_i})$
 forces that the $j({\mathbb P}_{<e_i})$-generic object contains $j[G_{<e_i}]$.  
 
 For $i = 0$, there is no element of $\mathbb{D}^+$ below $e_0$, 
 so let $(r_0, s_0)$ be the trivial condition.
 For $i$ non zero limit ordinal let $(r_i, s_i)$ be a lower bound for
 $(r_j, s_j)_{j < i}$, taking care that
 the support of $s_i$ is the union of the supports of
 $s_j$ for $j < i$. 

 To construct $(r_{i+1}, s_{i+1})$, we force below
 $(r_i, s_i)$ and construct
 $j: V[G_{<e_i}] \rightarrow M_0[L][G][H][S']$ where
 $H * S'$ is ${\mathbb R} * j({\mathbb P}_{<e_i})$-generic.  
 Recall that we defined ${\mathbb Q}_{e_i}$ from a
 $\kappa$-complete uniform filter $F_{e_i} \in V[G_{<e_i}]$,
 and that the $e_i$-component of $G$ gives
 a subset $B_{e_i}$ of $\kappa$ which is  ${\mathbb Q}_{e_i}$-generic. 

 Let $s'_i$ be the projection (i.e.\ restriction) of $s_i$ to $j({\mathbb P}_{<e_i})$.
 There are now two cases:
 \begin{enumerate}
   
 \item \label{case1} $(r_i, s'_i) \Vdash^{M_0[L][G]}_{ {\mathbb R} * j( {\mathbb P}_{<e_i} ) }  \kappa \in j(E)$ for every $E \in F_{e_{i}}$.
   In this case we define $r_{i+1}$ and $s_{i+1}$ as follows:
   \begin{enumerate}
   \item $r_{i+1} = r_i$.   
   \item $\dom(s_{i+1}) = \dom(s_i) \cup \{ j(e_i) \}$.
   \item $s_{i+1} \restriction \dom(s_i) = s_i$.
   \item $s_{i+1}(j(e_i)) = (B_{e_i} \cup \{ \kappa \}, \bigcap j[F_{e_i}])$.
   \end{enumerate}

 \item \label{case2} There is $E \in F_{e_i}$ such that $(r_i, s'_i) \not\Vdash^{M_0[L][G]}_{ {\mathbb R} * j( {\mathbb P}_{<e_i} ) }\kappa \in j(E)$.
   In this case we choose such an $E_i=E$ and find $(r_i^*, s_i^*) \in {\mathbb R} * j({\mathbb P}_{<e_i})$
   such that $(r_i^*, s_i^*) \le (r_i, s'_i)$ and
   $(r_i^*, s_i^*) \Vdash \kappa \notin j(E_i)$. We then define:
   \begin{enumerate}
   \item $r_{i+1} = r_i^*$.   
   \item $\dom(s_{i+1}) = \dom(s^*_i) \cup (\dom(s_i) \setminus \dom(s^*_i)) \cup \{ j(e_i) \}$.
   \item $s_{i+1} \restriction \dom(s^*_i) = s^*_i$.
   \item $s_{i+1} \restriction (\dom(s_i) \setminus \dom(s^*_i)) = s_i \restriction (\dom(s_i) \setminus \dom(s^*_i))$.
   \item $s_{i+1}(j(e_i)) = (B_{e_i}, \bigcap j[F_{e_i}])$.
   \end{enumerate}

 \end{enumerate}

 Let us check that we have maintained the hypotheses. The main points are:
 \begin{enumerate}
 \item $j(F_{e_i})$ is $j(\kappa)$-complete and $j[F_{e_i}] \in M_0[L][G][H][S']$ so that $\bigcap j[F_{e_i}] \in j(F_{e_i})$ and
   we constructed legitimate conditions in $j({\mathbb Q}_{e_i})$.
 \item In either case of the construction,
   $\dom(s_{i+1}) \subseteq \dom(s_i) \cup j({\mathbb{D}_{<e_i}}) \cup \{ j(e_i) \}$.
   Since $\mathbb{D}_{<e_i} \subseteq \{ e_\eta: \eta < i \}$, it follows that
   $\dom(s_{i+1}) \subseteq j( \{ e_\zeta: \zeta \le i \})$. 
 \item The filter added by $G$ at $e_i$ is the set of pairs $(s, E)$ where $E \in F_{e_i}$,
   $s$ is an initial segment of $B_{e_i}$ and $B_{e_i} \setminus s \subseteq E$.
   Since $j ( (s, E) ) = (s, j(E))$, it is easy to see that
   it is forced that $(B_{e_i}, \bigcap j[F_{e_i}]) \le (s, j(E))$. In the first case
   $(r_i, s_i)$ forces that $(B_{e_i} \cup \{ \kappa \}, \bigcap j[F_{e_i}]) \le (s, j(E))$.
 \end{enumerate}

 When the construction of $(r_i, s_i)_{i < \mu^+}$ is done, we choose $(r', s')$ to be a lower bound
 and note that $(r', s')$ forces that $j$ can be lifted onto $V[G]$.
 Recall that every subset of $\kappa$ in $V[G]$ lies in $V[G_{<a}]$ for some $a \in \mathbb{D}^+$, and
 observe that if $$X =  \{ \dot A : \text{For some }a \in \mathbb{D}^+,\ \dot A\text{ is a canonical }{\mathbb P}_{<a}\text{-name for a
   subset of }\kappa \},$$
 then $\vert X \vert =  \mu^+$. Using the closure of ${\mathbb R} * j(\mathbb P)$ again,
 we find $(r, s) \le (r', s')$ such that $(r, s)$ decides $\kappa \in j(\dot A)$ for every
 $\dot A \in X$. It is now easy to see that
 if $U = \{ {\dot A}[G] :\mbox{$\dot A \in X$ and $(r, s) \Vdash \kappa \in j(\dot A)$} \}$
 then $V[G] \models \mbox{``$U$ is a measure on $\kappa$''}$.

 Let $\dot U$ be a canonical name for the measure $U$ produced by this construction and let us verify (\ref{condition}) of Corollary \ref{cor:diagonalizing-the-ultrafilter}. This is done in Claim \ref{keyclaim2}:
\begin{claim} \label{keyclaim} 
   For every $a \in S^*_{\dot{U}}\times \mathbb{D}$,  $B_a \in U$.
\end{claim}
\begin{proof} 
  Let $a = e_i = (\beta, d)$ say,
  and consider the construction of $(r_{i+1}, s_{i+1})$.  The key point is that we must be in case \ref{case1}
  of the construction, so assume for a contradiction that we are in case \ref{case2}.
  Since $(r, s) \le (r_{i+1}, s_{i+1})$, this means that there is $E \in F_{e_i}$ such that
  $E \notin U$. 
    Since $\beta$ is an active stage, $A_\beta = \dot U_\beta$, and thus
    $E \in U \cap V[G_ {\beta\times\mathbb{D}}]$, which is a contradiction.

    Since we are in case \ref{case1}, $(r_{i+1}, s_{i + 1}) \Vdash \kappa \in j(B_a)$. So
    $(r, s) \Vdash \kappa \in j(B_a)$, and therefore $B_a \in U$. 
\end{proof}
As a corollary, we conclude that whenever $\beta$ is an active stage of the iteration the sets $B_a$ for $a\in \{\beta\}\times\mathbb{D}$ belong to  $U$, and therefore generate a $\kappa$-complete filter. 

\begin{claim} \label{keyclaim2} Let $a, a' \in S^*_{\dot{U}}\times \mathbb{D}$ where $a < a'$.
  Then $\Vdash_{\mathbb{P}_{<a'}}\dot{B}_a \in \dot{ F}_{a'}$. 
\end{claim} 
\begin{proof}
 Let $a=(\beta,d)$
  and $a'=(\beta',d')$. By the definition of the order, $\beta < \beta'$. 
  Then, clearly
  $B_a \in V[G_{\beta'\times\mathbb{D}_{<a'}}]$.  Let $\dot B_a$ be a canonical ${\mathbb P}_{<a'}$-name for
  $B_a$. We need to show that
  $\Vdash^V_{ {\mathbb P}_{\beta'\times\mathbb{D}} } \dot B_a  \in A_{\beta'}$.
  Let $G_{\beta'\times\mathbb{D}}$ be an arbitrary generic
  for ${\mathbb P}_{\beta'\times\mathbb{D}}$, and force to
  prolong it to a $\mathbb{P}$-generic $G$. Then by Claim \ref{keyclaim} and Equation (\ref{equation: S points})
  \[
  B_a \in U \cap V[G_{\beta'\times\mathbb{D}}]
  = \dot{U}_{\beta'}[G_{\beta'\times\mathbb{D}}]
  = A_{\beta'}[G_{\beta'\times\mathbb{D}}]
  \]
  Note that this is forced by the trivial condition.
  Hence, by definition, $(1,\dot{B}_a)\in \dot{F}_{a'}$ and $B_a\in F_{a'}$. 
\end{proof}

This concludes the proof of Theorem \ref{msble}.
  \end{proof}
We are ready to prove Theorem \ref{Cor: Simple P-point measurable}. Since (\ref{condition}) was already proven, we would like to apply Lemma \ref{Lemma: aux} to some $I\subseteq S^*_{\dot{U}}$:
\begin{proof}[Proof of Theorem~\ref{Cor: Simple P-point measurable}] Let $\delta \in S^*_{\dot{U}}$ be a limit point of $S^*_{\dot{U}}$ such that there is $I\subseteq S^*_{\dot{U}}$ unbounded in $\delta$ with $otp(I)=\mu$. To find such an $\delta$, simply note that the set of all limit points of $S^*_{\dot{U}}$ which have such a set $I$ is a club relative to $\cf(\mu)$. Applying Lemma \ref{Lemma: aux} or more precisely Remark \ref{remark: additional}, we conclude that $U\cap V[G_{\delta\times\mathbb{D}}]=\dot{U}_\delta[G_\delta]$ is a simple $\mathbb{P}_{\mathbb{D}}$-point.
\end{proof}

\begin{corollary}\label{Cor: Clubs}
    From the model of the previous theorem, for every $\kappa^{+}$-directed well-founded poset  $\mathbb{D}$,  there is a $\kappa^+$-cc and $\kappa$-directed closed forcing that adds a base $\mathcal{C}$ for the club filter $\text{Cub}_\kappa$ such that $(\mathcal{C},\supseteq^*)\simeq \mathbb{D}$.
\end{corollary}
\begin{proof}
Let $\mathcal{B}$ be the base from the previous theorem and let $\mathcal{C}$ be the set of closures of the sets of $\mathcal{B}$. Then since $\mathcal{B}$ generated a normal ultrafilter, $\mathcal{C}$ generates $\text{Cub}_\kappa$. Clearly, if $X\subseteq^* Y$ then $cl(X)\subseteq^* cl(Y)$. A closer look at the proof of Corollary \ref{cor:diagonalizing-the-ultrafilter} will reveal that if $X,Y\in \mathcal{B}$ and $X\not\subseteq^* Y$, then $X\not\subseteq^* cl(Y)$. Indeed, in Lemma \ref{lemma:incompatibility}, the choice of $\gamma$ is such that $\gamma$ is forced to be outside of the closure of $\dot{B}_{e}$. Therefore, $cl(X)\not\subseteq^* cl(Y)$.
\end{proof}
\subsection{The countable case}\label{section: omega case}
\begin{theorem}\label{thm: the omega simple p-point}
    Let $\mathbb{D}$ be a well-founded, $\sigma$-directed set, which has a possible order type  $\lambda$, and assume that 
$\diamondsuit_{\lambda^{+}}(\lambda^{+}\cap\cof(\lambda))$ holds. 
    Then, there is a ccc \ forcing extension in which there is a simple $P_\mathbb{D}$-point ultrafilter on $\omega$. 
\end{theorem}
\begin{proof}
    Let $\mathbb{P}=\mathbb{P}_{\mathbb{D}^+}$ be the Mathias iteration from section \S\ref{Section: Mathias iteration}.
    
    Let us work in a generic extension $V[G]$. For each $a\in\mathbb{D}^+$,  recall that $B_a$ denotes the Mathias real added by this coordinate.  
    
    Let $I \subseteq \lambda^{+}$ with the following properties:
    \begin{itemize}
        \item [$(A)$] Every $\alpha \in I$ is active. 
        \item [$(B)$] For every $\alpha < \beta$ in $I$, for every $d\in \mathbb{D}$, \[\Vdash_{\mathbb{P_{\beta\times\mathbb{D}}}} \dot{B}_{\alpha, d} \in A_\beta.\]
        (Recall that $A_\beta$ denotes the $\beta^{\text{th}}$-element in the diamond sequence)
        \item [$(C)$] $I$ is maximal among sets with  properties $(A),(B)$. 
    \end{itemize}

    By Zorn's lemma, $I$ exists, and it is defined in $V$. 
    We will show that $I$ must be unbounded, and that  $\{B_{(\alpha,d)}$: $\alpha\in I, d\in \mathbb{D}\}$ generates an ultrafilter.

    \begin{claim}\label{claim:adding-one-row} Let $G_\alpha\subseteq\mathbb{P}_{\alpha\times\mathbb{D}}$ be $V$-generic for some active level $\alpha$. Then the collection  $\{B_{(\alpha,d)}$: $d\in \mathbb{D}\}$ generates a proper filter which extends $ A_\alpha[G_\alpha]$. 
    \end{claim}
    \begin{proof}
        Let us show that the collection $\{B_{(\alpha,d)} \mid d\in \mathbb{D}\}$ has the finite intersection property. Assume otherwise, then working in $V[G_\alpha]$, there is a condition $p\in \mathbb{P}_{(\alpha+1)\times\mathbb{D}}$
        with $p \restriction \alpha \times {\mathbb D} \in G_\alpha$ forcing that the intersection of some finite subset of $\{\dot{B}_{(\alpha,d)} \mid d\in \mathbb{D}\}$  is finite. By strengthening $p$ we may find a specific finite set $b\subseteq \mathbb{D}$ and $l<\omega$ such that $p$ forces $\bigcap_{x\in b} \dot{B}_{(\alpha,x)} \subseteq l$. 

        Let us look at $p(\alpha, x) = \langle \check t_x, \dot{S}_x\rangle$. 
        As we are working in $V[G_\alpha]$ we may treat $S_{x} = \dot{S}_{x}[G_{\alpha}]$ as actual sets and not merely names. 
        As $S_{x} \in A_\alpha[G_\alpha]$ for each $x$, $ \bigcap_{x\in b}  S_{x}$ is infinite and in particular there is some $l\leq k < \omega$ in this intersection which is larger than $\max t_x$ for all $x\in b$. Extend the condition $p\restriction \alpha\times\mathbb{D}$ to $p^*\restriction \alpha\times\mathbb{D}\in G_\alpha$ so that $p^*\restriction \alpha\times\mathbb{D}\Vdash k\in \bigcap_{x\in b}\dot{S}_x$, and extend each $p(\alpha,x)$ to $p^*(\alpha,x)$ by adding $k$ to $t_x$ for all $x\in b$. $p^*$ is a legitimate condition extending $p$, which forces contradictory information. 

        The fact that the filter generated by $\{B_{(\alpha,d)}\mid d\in\mathbb{D}\}$ extends $A_\alpha[G_\alpha]$ follows from the observation in Remark \ref{remark: directedness}(IIb).
    \end{proof}
    \begin{corollary}
    The collection $\{B_{(\alpha, a)} \mid \alpha \in I, a\in \mathbb{D}\}$ has the finite intersection property.
    \end{corollary}
    \begin{proof}
   We use Claim \ref{claim:adding-one-row}: Indeed, this would be trivial if $I$ happens to be empty, so suppose that $I\neq\emptyset$. 
    If $I$ has a maximal element $\alpha_*$, and we force to obtain $G_{\alpha^*}$ which is ${\mathbb P}_{\alpha^* \times {\mathbb D}}$-generic,
    then $\{B_{(\alpha_*, a)} \mid a\in \mathbb{D}\} \cup A_{\alpha^*}[G_{\alpha^*}]$ is a collection that contains $\{B_{(\beta, a)} \mid \beta \in I, a\in \mathbb{D}\}$ by property $(B)$, and by property $(A)$ we may apply Claim \ref{claim:adding-one-row} to conclude that this collection has the finite intersection property. 

    Otherwise, for any given finite collection of elements in $I\times \mathbb{D}$, we can chop $I$ up to a sufficiently large ordinal $\alpha^*\in I$ and reduce the problem to the previous case.\end{proof}
    Let $\mathcal{F}$ be the filter generated by $\{B_{(\alpha,d)}\mid \alpha \in I, d\in \mathbb{D}\}$ and let $\mathcal{U}$ be an ultrafilter extending it in $V[G]$. Let $\dot{U}$ be a nice name for the ultrafilter $\mathcal{U}$ with the property that every condition forces that it contains $\{B_{(\alpha, d)} \mid \alpha \in I,\, d\in \mathbb{D}\} $. 
    \begin{claim}\label{claim: I is an ultrafilter}
        $I$ is unbounded, and $\{B_{(\alpha, d)} \mid \alpha \in I,\, d\in \mathbb{D}\}$ generates  $\mathcal{U}$.
    \end{claim}
    \begin{proof}
        For the first claim, assume that $I$ is bounded and let $\beta < \lambda^{+}$ large enough so that $I\subseteq \beta$.    
        Pick $\gamma > \beta$ in which the restriction of $\dot{U}$ to $\mathbb{P}_{\gamma \times \mathbb{D}}$ is guessed by the diamond sequence. Let us denote this name for an ultrafilter by $\dot{U}_\gamma$ (i.e. $A_\gamma=\dot{U}_\gamma$). 
        Clearly, $I\cup \{\gamma\}$ satisfies the properties $(A)-(B)$, contradicting the maximality in property $(C)$. 

        For the second claim,        
        since $\{B_{(\alpha,d)}\mid d\in \mathbb{D}\}$ generates in $V[G_{\alpha+1}]$ a filter 
        extending the $V[G_\alpha]$-ultrafilter $A_\alpha[G_\alpha]$, we conclude that $\{B_{(\alpha,d)}\mid d\in \mathbb{D}\}$ 
        generates a $V[G]$-ultrafilter which then has to be $\mathcal{U}$.
    \end{proof}
    To finish the proof, use Claim \ref{claim: I is an ultrafilter} to see that $I$ satisfies the assumption of Lemma \ref{Lemma: aux} and produce the club $C$. Pick any point $\beta\in C\cap S^*_{\dot{U}}$, such that for some unbounded $I^*\subseteq I\cap \beta$,  $\text{otp}(I^*)=\lambda$. Such a point exists by the stationarity of $S^*_{\dot{U}}\cap C$ and the observation that there is a club relative to $\text{cof}(\lambda)$ of $\beta$'s such that there is $I^*\subseteq I$, $\text{otp}(I^*)=\lambda$ and $I^*$ is unbounded in $\beta$. 
    By the definition of $I$, $I^*\times\mathbb{D}$ satisfies (\ref{condition}) and by the previous claim 
    $\{B_{(\alpha,d)} \mid \alpha \in I^*, d\in \mathbb{D}\}$ generates an ultrafilter, so Lemma \ref{Lemma: aux} applies.
\end{proof}
\section{Applications to cofinal types and cardinal characteristics}\label{Section: Tukey order}
In this section we derive some corollaries from our main result. Let us formulate a simple lemma which explains what happens to the isomorphism or Tukey type of the ordered set $\mathbb{D}$ under those mild forcing extensions. Clearly, changes are made (as we are adding an ultrafilter which is order isomorphic to it), but the essential structure of $\mathbb{D}$ is preserved. Note that in any forcing extension, a ground model unbounded (cofinal) map remains unbounded (cofinal).  
\begin{lemma}
Let $\mathbb{D}$ be a $\kappa^+$-directed set and $\mathbb{P}$ a $\kappa^+$-cc forcing and let $G\subseteq\mathbb{P}$ be $V$-generic. Then for any $f:A\to \mathbb{D}\in V[G]$, $A\in V$, there is $f':A\to\mathbb{D}\in V$ such that $f(a)\leq f'(a)$ for all $a\in A$.
\end{lemma}
\begin{proof}
    This is  straightforward from the $\kappa^+$-cc and $\mathbb{D}$ being $\kappa^+$-directed.
\end{proof}
\begin{corollary}\label{cor: comparison in generic}
    Let $A,\mathbb{D}\in V$ and assume that $\mathbb{D}$ is $\kappa^+$-directed, $\mathbb{P}$ is a $\kappa^+$-cc forcing notion and $G\subseteq\mathbb{P}$ is $V$-generic. Then:
    \begin{enumerate}
        \item  $V[G]\models A\leq_T \mathbb{D}$ iff $V\models A\leq_T \mathbb{D}$.
        \item $V[G]\models A\geq_T \mathbb{D}$ iff $V\models A\geq_T \mathbb{D}$.
    \end{enumerate}
\end{corollary}
\begin{proof}
    In both $(1)$ and $(2)$ we are left to show the implication from left to right. For $(1)$, let $f:A\to \mathbb{D}\in V[G]$ be unbounded and let $f':A\to \mathbb{D}\in V$ be the map from the previous lemma. We claim that $f'$ is unbounded, otherwise, there is $\mathcal{A}\subseteq A$ unbounded and $d\in \mathbb{D}$ such that $f'[\mathcal{A}]$ is bounded by $d$. Clearly, the boundedness of a subset of $\mathbb{D}$ is absolute, and since $f$ is an unbounded map, $f[\mathcal{A}]$ should be unbounded. But for every $a\in A$, $f(a)\leq f'(a)\leq d$, contradiction.
     $(2)$ is proven similarly.
    \end{proof}
The following set is also known as the \textit{point-spectrum}. It was studied for example in \cite{Isbell65,Gilton,TomCohesive,GARTSIDE2021102873}.
\begin{definition}\label{Def: Tukey Spec}
    For a directed set $\mathbb{D}$ we define the \emph{Tukey spectrum} of $\mathbb{D}$, $\spec_T(\mathbb{D})$ to consist of all regular cardinals $\lambda$ such that $\lambda\leq_T \mathbb{D}$.
\end{definition}
Clearly, the Tukey spectrum is an invariant of the Tukey order and therefore of order isomorphism.
An equivalent condition for $\lambda\in \spec_T(\mathbb{D})$ is to require the existence of a sequence $\l d_\alpha\mid \alpha<\lambda\r$ such that for every $I\in [\lambda]^\lambda$, $\{d_\alpha\mid \alpha\in I\}$ is unbounded in $\mathbb{D}$.
Corollary \ref{cor: comparison in generic} has the following corollary:
\begin{corollary}
     Let $\mathbb{P}$ be either $\kappa^+$-cc or $|\mathbb{D}|^+$-closed forcing and $\mathbb{D}$ a $\kappa^+$-directed set. Let $G\s \mathbb{P}$ be a $V$-generic filter, then  $$\spec_T(\mathbb{D})^V=\spec_T(\mathbb{D})^{V[G]}.$$
\end{corollary}
\begin{proof}
    Note that $\spec_T(\mathbb{D})\subseteq [\kappa^+,|\mathbb{D}|]$, hence Corollary \ref{cor: comparison in generic} implies the result for $\kappa^+$-cc forcings instantly. If $\mathbb{P}$ is $|\mathbb{D}|^+$-closed then this is much simpler since all the witnessing sequences for $\lambda$'s being in the Tukey spectrum exist in the ground model. 
\end{proof}
For the following fact, see for example \cite{Gilton}:
\begin{fact}\label{fact:Easy fact} {}\
   \begin{enumerate}
       \item For any directed sets $\mathbb{P},\mathbb{Q}$, $\spec_T(\mathbb{P}\times \mathbb{Q})=\spec_T(\mathbb{P})\cup \spec_T(\mathbb{Q})$.
       \item If $\mathbb{P}$ is linear, then $\spec_T(\mathbb{P})=\{\cf(\mathbb{P})\}$.
   \end{enumerate}  
\end{fact}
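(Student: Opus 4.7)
The plan is to use throughout the equivalent characterization stated earlier: $\lambda \in \spec_T(\mathbb{D})$ if and only if there exists a sequence $\langle d_\alpha : \alpha < \lambda\rangle$ in $\mathbb{D}$ such that $\{d_\alpha : \alpha \in I\}$ is unbounded for every $I \in [\lambda]^\lambda$. Both parts then reduce to direct manipulations of such witnesses, together with regularity of $\lambda$ and, in one place, a Ramsey-type partition argument.

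For part $(1)$, the inclusion $\spec_T(\mathbb{P}) \cup \spec_T(\mathbb{Q}) \subseteq \spec_T(\mathbb{P}\times\mathbb{Q})$ is the easy direction: given a witness $\langle p_\alpha\rangle_{\alpha<\lambda}$ to $\lambda \leq_T \mathbb{P}$ and any fixed $q_0 \in \mathbb{Q}$, the sequence $\langle (p_\alpha,q_0)\rangle_{\alpha<\lambda}$ witnesses $\lambda \leq_T \mathbb{P}\times \mathbb{Q}$, since any bound $(p^*,q^*)$ on a $\lambda$-sized subfamily would project to a bound $p^*$ on the $\mathbb{P}$-component, contradicting the choice of $\langle p_\alpha\rangle$. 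For the reverse inclusion, fix a witness $\langle (p_\alpha,q_\alpha) : \alpha<\lambda\rangle$ to $\lambda \leq_T \mathbb{P}\times \mathbb{Q}$. Either $\langle p_\alpha\rangle$ already witnesses $\lambda \leq_T \mathbb{P}$ and we are done, or there is some $I \in [\lambda]^\lambda$ and $p^*\in \mathbb{P}$ with $\{p_\alpha : \alpha \in I\}$ bounded by $p^*$. In the latter case, re-indexing $I$ by its order type $\lambda$, I claim $\langle q_\alpha : \alpha \in I\rangle$ witnesses $\lambda \leq_T \mathbb{Q}$: a bound $q^*$ on $\{q_\alpha : \alpha \in J\}$ for any $J \in [I]^\lambda$ would combine with $p^*$ to bound $\{(p_\alpha,q_\alpha) : \alpha \in J\}$ by $(p^*,q^*)$, contradicting the Tukey property of the original sequence.

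For part $(2)$, that $\cf(\mathbb{P}) \in \spec_T(\mathbb{P})$ is witnessed by any strictly increasing cofinal sequence of length $\cf(\mathbb{P})$, because in a linear order any $\cf(\mathbb{P})$-sized subsequence of a cofinal sequence remains cofinal. Conversely, let $\lambda \in \spec_T(\mathbb{P})$ with witness $\langle p_\alpha : \alpha<\lambda\rangle$. To see $\lambda \leq \cf(\mathbb{P})$, suppose toward a contradiction that $\cf(\mathbb{P})<\lambda$, fix a cofinal sequence $\langle r_\beta : \beta<\cf(\mathbb{P})\rangle$, define $f(\alpha) = \min\{\beta : p_\alpha \leq_\mathbb{P} r_\beta\}$, and use regularity of $\lambda$ to find $\beta^*$ with $|f^{-1}(\beta^*)|=\lambda$; then $r_{\beta^*}$ bounds the corresponding $\lambda$-subfamily, contradiction. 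To see $\cf(\mathbb{P})\leq \lambda$, apply the Erd\H{o}s--Dushnik--Miller arrow $\lambda\to (\lambda,\omega)^2$ (or Ramsey's theorem if $\lambda=\omega$) to the $2$-coloring of $[\lambda]^2$ that sends $\{\alpha<\beta\}$ to whether $p_\alpha>_\mathbb{P} p_\beta$ or $p_\alpha\leq_\mathbb{P} p_\beta$. An infinite strictly $\mathbb{P}$-decreasing subsequence would be bounded above by its first term, violating the unboundedness of every $\lambda$-subfamily; so one obtains $I\in [\lambda]^\lambda$ along which $(p_\alpha)$ is weakly increasing. Each constant plateau must have size $<\lambda$ (else the plateau itself is a bounded $\lambda$-subfamily), so by regularity of $\lambda$ there are $\lambda$ many plateaus; choosing one representative from each produces a strictly increasing subsequence of length $\lambda$ which is unbounded, hence cofinal in the linear order, giving $\cf(\mathbb{P})\leq \lambda$.

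The only genuinely non-trivial ingredient is the Ramsey-style extraction of a monotone subsequence in part $(2)$, which is where Erd\H{o}s--Dushnik--Miller enters; the remaining steps are direct pigeonhole and regularity arguments applied to the witness sequences.
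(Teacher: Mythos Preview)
The paper does not prove this fact; it simply cites \cite{Gilton}. So there is no ``paper's own proof'' to compare against, and I will just assess your argument.

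Part (1) is fine. Part (2) contains a genuine gap in the direction $\cf(\mathbb{P})\leq\lambda$. Your Erd\H{o}s--Dushnik--Miller argument does not go through: the partition relation $\lambda\to(\lambda,\omega)^2$ only rules out a $\lambda$-homogeneous set in the ``decreasing'' color by producing an $\omega$-sized decreasing subsequence. You then say this $\omega$-sequence is bounded by its first term and that this ``violates the unboundedness of every $\lambda$-subfamily'' --- but it does not, since an $\omega$-sized set is not a $\lambda$-sized set (for $\lambda>\omega$). Nothing in the hypothesis prevents small bounded subfamilies, so you cannot eliminate the $\omega$-homogeneous alternative this way.

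The fix is much simpler than the route you attempted. Taking $I=\lambda$ in the witness property, the entire set $\{p_\alpha:\alpha<\lambda\}$ is unbounded in $\mathbb{P}$. In a \emph{linear} order, unbounded is the same as cofinal: if $p\in\mathbb{P}$ fails to bound the set, then some $p_\alpha$ satisfies $p_\alpha\not\leq p$, hence $p_\alpha>p$ by comparability. Thus $\{p_\alpha:\alpha<\lambda\}$ is cofinal, giving $\cf(\mathbb{P})\leq\lambda$ immediately. Combined with your (correct) pigeonhole argument for $\lambda\leq\cf(\mathbb{P})$, this finishes part (2) without any Ramsey theory.
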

The main results of the previous sections have the following immediate corollaries:
\begin{corollary}
   Suppose that $\mathbb{D}$ is a well-founded, countably directed set with $|\mathbb{D}|=\mu$. Then there is a ($\mu^+$-closed $\ast$ ccc) forcing extension in which there is an ultrafilter $U$ on $\omega$ such that $U\equiv_T \mathbb{D}$ and in particular
$\spec_T(U)=\spec_T(\mathbb{D}).$
\end{corollary}
\begin{corollary}
   Suppose that $\kappa$ is a supercompact cardinal and let $\mathbb{D}$ be a $\kappa^+$-directed well-founded set with $|\mathbb{D}|=\mu$. Then there is a ($\kappa$-cc $\ast \ \mu^+$-closed $\ast \ \kappa^+$-cc) forcing extension in which there is a normal ultrafilter $U$ on $\kappa$ such that $U\equiv_T \mathbb{D}$, and in particular
$\spec_T(U)=\spec_T(\mathbb{D}).$
\end{corollary}
 As for every normal ultrafilter $U$, the order $(U, \supseteq^*)$ is $\kappa^{+}$-directed, this result is locally optimal --- every partial order that might be compatible with the order $\supseteq^*$ of a normal ultrafilter can consistency be forced to actually be Tukey equivalent to one.

The next proposition shows that for the point spectrum to be preserved, we do not need any directedness  assumptions. More specifically:
\begin{proposition}
    Let $\mathbb{P}$ be a $\lambda$-cc forcing and $\mathbb{D}$ a directed set. Let $G\s \mathbb{P}$ be a $V$-generic filter, then  $$\lambda\in\spec_T(\mathbb{D})^V\text{ if and only if }\lambda\in\spec_T(\mathbb{D})^{V[G]}.$$
\end{proposition}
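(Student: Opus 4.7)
The plan is to work with the equivalent formulation displayed just after Definition~\ref{Def: Tukey Spec}: $\lambda\in\spec_T(\mathbb{D})$ if and only if there is a sequence $\langle d_\alpha : \alpha<\lambda\rangle$ of elements of $D$ such that for every $I\in [\lambda]^\lambda$ the set $\{d_\alpha : \alpha\in I\}$ is unbounded in $\mathbb{D}$. Since $\lambda$ is regular, this is equivalent to demanding $|\{\alpha<\lambda : d_\alpha\leq_D d\}|<\lambda$ for every $d\in D$. The two implications will use $\lambda$-cc and countable directedness of $\mathbb{D}$ in complementary ways.

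For $(\Rightarrow)$, I fix a witnessing sequence $\langle d_\alpha\rangle\in V$ and argue that the very same sequence witnesses $\lambda\in\spec_T(\mathbb{D})^{V[G]}$. Suppose otherwise; then in $V[G]$ there are $d\in D$ and $I\in [\lambda]^\lambda$ with $d_\alpha\leq_D d$ for every $\alpha\in I$. Choose a $\mathbb{P}$-name $\dot I$ and a condition $p\in G$ forcing $|\dot I|=\check\lambda$ and $(\forall \alpha\in\dot I)\, d_{\check\alpha}\leq_D\check d$. Working in $V$, set $J := \{\alpha<\lambda : (\exists q\leq p)\; q\Vdash \check\alpha\in \dot I\}$. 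Then $I\subseteq J$ and, since $\lambda$-cc preserves $\lambda$ as a cardinal, $|J|^V=\lambda$. For each $\alpha\in J$, any witnessing $q\leq p$ forces $d_{\check\alpha}\leq_D\check d$; as this is a statement about ground model objects, $d_\alpha\leq_D d$ already holds in $V$. Thus $\{d_\alpha : \alpha\in J\}$ is bounded in $V$, contradicting the choice of the sequence.

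For $(\Leftarrow)$, fix a witness $\langle d_\alpha\rangle\in V[G]$ with names $\dot d_\alpha$. For each $\alpha$ define $E_\alpha := \{e\in D : (\exists p\in \mathbb{P})\; p\Vdash \dot d_\alpha = \check e\}\in V$; the $\lambda$-cc yields $|E_\alpha|<\lambda$, and in the ccc case $E_\alpha$ is countable. Countable directedness of $\mathbb{D}$ then provides, in $V$, an element $\tilde d_\alpha\in D$ bounding every element of $E_\alpha$ from above. The $V$-sequence $\langle \tilde d_\alpha\rangle$ now witnesses $\lambda\in\spec_T(\mathbb{D})^V$: if some $d\in D$ and $I\in V\cap [\lambda]^\lambda$ had $\tilde d_\alpha\leq_D d$ for every $\alpha\in I$, then since $d_\alpha\in E_\alpha$ and every element of $E_\alpha$ lies $\leq_D$-below $\tilde d_\alpha$, we would get $d_\alpha\leq_D d$ in $V[G]$ for every $\alpha\in I$, contradicting the assumption on $\langle d_\alpha\rangle$.

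The main obstacle is the backward direction, whose whole weight rests on collapsing the set $E_\alpha$ of possible values of $\dot d_\alpha$ to a single ground-model upper bound $\tilde d_\alpha$. In the ccc context this is exactly what the combination of $|E_\alpha|\leq\aleph_0$ and countable directedness of $\mathbb{D}$ delivers; for a genuinely larger chain condition one would read the countable directedness hypothesis as ``$\lambda$-directedness''. The forward direction, by contrast, uses only the regularity of $\lambda$ and the preservation of cardinals $\geq\lambda$ afforded by $\lambda$-cc.
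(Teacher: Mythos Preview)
Your forward direction is essentially the paper's argument: both reduce to observing that the ground-model set $\{\alpha<\lambda : d_\alpha\leq_D d\}$ already has size $\lambda$, contradicting the choice of the witness. (The paper writes this set down directly; you go through the name $\dot I$ and the set $J$ of possible members, but this is a cosmetic difference.)

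The backward direction, however, has a genuine gap relative to the stated hypothesis. Your argument hinges on finding, for each $\alpha$, an upper bound $\tilde d_\alpha$ for the set $E_\alpha$ of all possible values of $\dot d_\alpha$. The $\lambda$-cc gives only $|E_\alpha|<\lambda$, and countable directedness of $\mathbb{D}$ lets you bound $E_\alpha$ only when it is countable. So your proof goes through for $\lambda=\aleph_1$ (the ccc case, which is indeed the application in the paper), but not for arbitrary $\lambda$-cc with merely countably directed $\mathbb{D}$, as you yourself note in the last paragraph. For the proposition as stated this is a gap, not just a stylistic difference.

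The paper's backward direction avoids directedness entirely and therefore works for all $\lambda$. Instead of collapsing the whole set $E_\alpha$ to a single bound, one simply picks for each $\alpha$ one condition $p_\alpha\leq p$ and one value $e_\alpha\in D$ with $p_\alpha\Vdash \dot d_\alpha=\check e_\alpha$, and shows that $\langle e_\alpha\rangle$ is already a witness in $V$. The point is that if some $I\in[\lambda]^\lambda\cap V$ had $\{e_\alpha:\alpha\in I\}$ bounded by $d$, then $p$ cannot force $\{\alpha\in I:p_\alpha\in\dot G\}$ to be bounded below $\lambda$: if it did, by $\lambda$-cc there would be a fixed bound $\beta<\lambda$, and any $\gamma\in I\setminus\beta$ yields $p_\gamma\leq p$ forcing $\gamma$ into that set, a contradiction. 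Hence some generic $G'\ni p$ has $\lambda$-many $\alpha\in I$ with $p_\alpha\in G'$, and for those $\alpha$ one gets $d_\alpha=e_\alpha\leq_D d$ in $V[G']$, contradicting that $p$ forces $\langle\dot d_\alpha\rangle$ to be a witness. This route trades your ``bound all possible values'' idea for a density argument, and that is exactly what eliminates the need for any directedness beyond what the spectrum definition already presupposes.
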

\begin{proof}
    The implication from left to right always hold true.
     For the converse, suppose that $\lambda\in \spec_T(\mathbb{D})^{V[G]}$. Let $\langle d_\alpha\mid \alpha<\lambda\rangle\subseteq D$ be a sequence in $V[G]$ such that for each index set $I\in [\lambda]^\lambda$ the sequence $\langle d_\alpha\mid \alpha\in I\rangle$ is unbounded.  Let $p\in G$ be a condition forcing the above. For each $\alpha<\lambda$ let $\dot{d}_\alpha$ be a $\mathbb{P}$-name such that $(\dot{d}_\alpha)_G=d_\alpha$. For each $\alpha<\lambda$ let $p_\alpha \leq p$ such that $p_\alpha\Vdash_{\mathbb{P}} \dot{d}_\alpha=\check{e}_\alpha$ for some $e_\alpha\in D$. We claim that $\langle e_\alpha\mid \alpha<\lambda\rangle$ witnesses $\lambda\in \spec(\mathbb{D})^V$. Suppose otherwise, and let $I\in [\lambda]^\lambda$ be such that $\langle e_\alpha\mid \alpha\in I\rangle$ is bounded. To contradict this assumption it suffices to check that $p\Vdash_{\mathbb{P}}``|\{\alpha<\lambda\mid p_\alpha\in \dot{G}\}|=\lambda"$. So, let us do it: Suppose towards a contradiction that this was false. Since $\mathbb{P}$ is $\lambda$-cc there is $\beta<\lambda$ such that $p\Vdash \{\alpha<\lambda\mid p_\alpha\in \dot{G}\}\s \beta$. Note that this is impossible because $p_\beta$ itself forces $``p_\beta\in \dot{G}"$.
\end{proof}

A related notion is the depth spectrum introduced in \cite{TomCohesive}:
\begin{definition}\label{Def: Depth Spec}
   Let $\mathbb{D}=(D,\leq_D)$ be a directed set. A \textit{$\mathbb{D}$-tower of length $\lambda$} is a sequence $\l d_\alpha\mid \alpha<\lambda\r$ which is $\leq_D$-increasing and unbounded in $\mathbb{D}$. The \textit{depth spectrum of} $\mathbb{D}$, denoted by $\spec_{dp}(\mathbb{D})$ consists of all regular cardinals $\lambda$ such that there is a $\mathbb{D}$-tower of length $\lambda$.
\end{definition}
The following proposition is straightforward:
\begin{proposition}\label{Prop: Tukey reductions}
     Let $\mathbb{D}_i=(D_i,\leq_{D_i})$, $i=1,2$ be directed sets. If there is a monotone Tukey reduction $f:D_1\to D_2$ then $\spec_{dp}(D_1)\subseteq \spec_{dp}(D_2)$.
\end{proposition}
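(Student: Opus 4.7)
The plan is short: given a $\mathbb{D}_1$-tower, push it forward through $f$ and verify it remains a tower in $\mathbb{D}_2$. Fix $\lambda\in\spec_{dp}(\mathbb{D}_1)$ and a witnessing tower $\langle d_\alpha\mid \alpha<\lambda\rangle$, i.e.\ a $\leq_{D_1}$-increasing sequence which is unbounded in $\mathbb{D}_1$. Set $e_\alpha=f(d_\alpha)$ for every $\alpha<\lambda$.

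Next I would verify the two defining properties of a tower for $\langle e_\alpha\mid\alpha<\lambda\rangle$. Monotonicity of $f$ gives at once that $e_\alpha\leq_{D_2}e_\beta$ whenever $\alpha\leq\beta$, so the sequence is $\leq_{D_2}$-increasing. Unboundedness of $\{d_\alpha:\alpha<\lambda\}$ in $\mathbb{D}_1$, combined with the fact that $f$ is a Tukey reduction (unbounded sets map to unbounded sets), yields that $\{e_\alpha:\alpha<\lambda\}=f[\{d_\alpha:\alpha<\lambda\}]$ is unbounded in $\mathbb{D}_2$. Hence $\langle e_\alpha\mid\alpha<\lambda\rangle$ is a $\mathbb{D}_2$-tower of length $\lambda$, and $\lambda\in\spec_{dp}(\mathbb{D}_2)$.

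There is no real obstacle here; the only subtle point (if the word ``increasing'' is read strictly) is to check that the image sequence has order type $\lambda$ as a genuinely strictly increasing sequence. This is handled by regularity of $\lambda$: if the sequence $\langle e_\alpha\mid\alpha<\lambda\rangle$ stabilized on a tail, it would be bounded, contradicting the unboundedness just established; so one can recursively pick indices $\alpha_0<\alpha_1<\cdots$ at which the values strictly increase, and regularity of $\lambda$ lets the recursion run through all $\lambda$ stages. This gives a strictly increasing, unbounded subsequence of length $\lambda$, completing the argument.
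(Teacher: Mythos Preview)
Your proof is correct and is exactly the straightforward argument the paper has in mind; the paper in fact omits the proof entirely, stating only that the proposition is straightforward. Note that since the paper defines a tower as merely $\leq_D$-increasing (not strictly), your final paragraph about extracting a strictly increasing subsequence is unnecessary, though harmless.
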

\begin{remark}\label{remark: basis remark}
    If $\mathcal{B}$ is cofinal in $\mathbb{D}=(D,\leq_D)$, then\footnote{Formally, $\mathbb{D}\equiv_T(\mathcal{B},\leq_D\restriction \mathcal{B})$.}, $\mathbb{D}\equiv_T \mathcal{B}$. Moreover, the identity map from $\mathcal{B}$ into $D$ is a monotone Tukey reduction. 
\end{remark}
Our interest is in orders of the form $(U,\supseteq^*)$ where $U$ is an ultrafilter.
Recall that the decomposability spectrum of an ultrafilter $U$ over $\kappa$, denoted by $\spec_{dc}(U)$, is defined as all regular $\lambda$ such that $U$ is $\lambda$-decomposable, that is, there is $f:\kappa\to \lambda$ such that $f$ is unbounded in $\lambda$ mod $U$. We have that
$$\spec_{dc}(U)\setminus \{\kappa\}\subseteq \spec_{dp}(U)\setminus \{\kappa\}\subseteq \spec_T(U) \setminus \{\kappa\}$$
The following was asked in \cite{TomCohesive}:
\begin{question}
    Is $\spec_T(U)$ a convex set of regulars? How about $\spec_{dp}(U)$? 
\end{question}
Using the consistency results from the previous sections and the next proposition we are able to provide a negative answer to this question.
\begin{proposition}\label{prop: Spctrum of simple point}
    Suppose that $\mathcal{F}$ is a simple $P_{\lambda_0\times...\times\lambda_n}$-point filter, then $\spec_{dp}(U)=\spec_{T}(U)=\{\lambda_0,...,\lambda_n\}$.
\end{proposition}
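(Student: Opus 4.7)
The plan is to first pin down the Tukey type of $(U,\supseteq^*)$ and thereby compute $\spec_T(U)$, and then obtain $\spec_{dp}(U)$ by sandwiching between the general bound $\spec_{dp}\subseteq\spec_T$ and explicit coordinate-wise witnesses.

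Since $U$ is a simple $P_{\mathbb{P}}$-point with $\mathbb{P}:=\lambda_0\times\cdots\times\lambda_n$, it has a base $\mathcal{B}$ with $(\mathcal{B},\supseteq^*)\cong \mathbb{P}$, and being a $\subseteq^*$-base, $\mathcal{B}$ is cofinal in $(U,\supseteq^*)$: every $A\in U$ has some $B\in\mathcal{B}$ with $B\subseteq^* A$, which in the poset $(U,\supseteq^*)$ reads as $A\leq B$. By Remark \ref{remark: basis remark}, $(U,\supseteq^*)\equiv_T \mathcal{B}\equiv_T \mathbb{P}$. Applying Fact \ref{fact:Easy fact} inductively --- each linear $\lambda_i$ contributes $\{\lambda_i\}$ to $\spec_T(\mathbb{P})$ --- this yields $\spec_T(U)=\spec_T(\mathbb{P})=\{\lambda_0,\dots,\lambda_n\}$.

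For the depth spectrum, first observe that $\spec_{dp}(D)\subseteq\spec_T(D)$ for any directed $D$: a $\leq$-increasing unbounded $\lambda$-sequence $\langle d_\alpha\mid\alpha<\lambda\rangle$ with $\lambda$ regular induces a monotone Tukey map $\alpha\mapsto d_\alpha$, since any $X\in[\lambda]^\lambda$ is cofinal in $\lambda$, so $\{d_\alpha\mid\alpha\in X\}$ is cofinal in $\{d_\alpha\}$ and hence unbounded in $D$. Thus $\spec_{dp}(U)\subseteq\{\lambda_0,\dots,\lambda_n\}$. For the reverse inclusion, for each $i\leq n$ consider in $\mathbb{P}$ the sequence $((0,\dots,0,\alpha,0,\dots,0))_{\alpha<\lambda_i}$ with $\alpha$ in coordinate $i$: it is strictly $\leq$-increasing and unbounded, since any candidate $\leq$-bound $(\beta_0,\dots,\beta_n)$ has $\beta_i<\lambda_i$, whereas $\alpha$ ranges over all of $\lambda_i$. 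Transport this via the isomorphism to a sequence in $\mathcal{B}$; cofinality of $\mathcal{B}$ in $(U,\supseteq^*)$ promotes unboundedness in $\mathcal{B}$ to unboundedness in $U$, because any putative $\supseteq^*$-upper bound $A\in U$ would be dominated by some $B\in\mathcal{B}$, contradicting unboundedness in $\mathcal{B}$. Hence $\lambda_i\in\spec_{dp}(U)$.

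The only real subtlety is tracking the direction of $\supseteq^*$ carefully, so that increasing witnesses in $\mathbb{P}$ genuinely translate to depth-$\lambda_i$ witnesses in $(U,\supseteq^*)$ and the cofinality argument points the correct way; once this and the isomorphism are aligned, everything reduces to a direct application of Remark \ref{remark: basis remark}, Fact \ref{fact:Easy fact}, and the general $\spec_{dp}\subseteq\spec_T$ inclusion.
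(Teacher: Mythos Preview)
Your proof is correct and follows essentially the same approach as the paper: both compute $\spec_T(U)$ via the Tukey equivalence $(U,\supseteq^*)\equiv_T\lambda_0\times\cdots\times\lambda_n$ and Fact~\ref{fact:Easy fact}, then sandwich $\spec_{dp}(U)$ between the coordinate-wise witnesses and the inclusion $\spec_{dp}\subseteq\spec_T$. The only cosmetic difference is that the paper obtains $\{\lambda_i\}\subseteq\spec_{dp}(U)$ by invoking Proposition~\ref{Prop: Tukey reductions} (monotone Tukey reductions preserve depth spectra) together with Remark~\ref{remark: basis remark}, whereas you transport the coordinate sequences through the isomorphism and verify unboundedness in $U$ directly; these are the same argument unpacked.
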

\begin{proof}
    Let $\mathcal{B}$ be a base for $U$ such that $(\mathcal{B},\supseteq^*)\simeq \lambda_0\times \lambda_1\times...\times\lambda_n$. Then $$U\equiv_T\lambda_0\times\lambda_1\times...\times\lambda_n,$$
    and by Fact \ref{fact:Easy fact},  $\spec_T(U)=\{\lambda_0,...\lambda_n\}$. By Remark \ref{remark: basis remark} and Proposition \ref{Prop: Tukey reductions}, $\spec_{dp}(\lambda_0\times...\times \lambda_n)\subseteq\spec_{dp}(U,\supseteq^*)$. Finally note that for every $0\leq i\leq n$, $\lambda_i\in \spec_{dp}(\lambda_0\times...\times\lambda_n)$ as witnessed by  the sequence $$\langle (0,0,...,\underset{i^{\text{th}}\text{place}}{\underbrace{\alpha}},...0,0)\mid \alpha<\lambda_i\rangle.$$
     
    Putting all of the above together, we have
    \begin{align*}
        \{\lambda_0,\lambda_1,...,\lambda_n\}&\subseteq \spec_{dp}(\lambda_0\times...\times\lambda_n)\\
        &\subseteq \spec_{dp}(U,\supseteq^*)\\
        &\subseteq \spec_T(U,\supseteq^*)=\{\lambda_0,\lambda_1,...,\lambda_n\}
    \end{align*}
     
\end{proof}
\begin{corollary}\label{Thm: nonConvex}\hfill
    \begin{enumerate}
        \item Let $\omega<\lambda_0<\lambda_1 < \lambda_2 < \cdots < \lambda_n$ be regular cardinals. It is consistent that there is an ultrafilter on $\omega$, such that  $\spec_{dp}(U)=\spec_T(U)=\{\lambda_0,\lambda_1, \lambda_2,\dots, \lambda_n\}$.
        \item Suppose that $\kappa$ is supercompact. Then for any regular cardinals $\kappa<\lambda_0<\lambda_1<...<\lambda_n$, there is a ($\kappa$-cc $\ast \ \lambda_n^+$-closed $\ast \ \kappa^+$-cc) extension where:
        \begin{enumerate}
            \item There is a normal ultrafilter $U$ such that $\spec_{dp}(U)=\spec_T(U)=\{\lambda_0,\lambda_1,...\lambda_n\}$.
            \item $\spec_{dp}(\text{Cub}_\kappa)=\spec_T(\text{Cub}_\kappa)=\{\lambda_0,\lambda_1...,\lambda_n\}$.
        \end{enumerate}
    \end{enumerate} 
\end{corollary}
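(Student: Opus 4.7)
The plan is to read this corollary as a direct packaging of the consistency results from the previous sections with the computation of the spectrum for simple $P$-point filters carried out in Proposition \ref{prop: Spctrum of simple point}. All of the hard work — construction of the non-linear Mathias iteration, verification of ccc/$\kappa^+$-cc, and the diamond-guided lifting argument at a supercompact $\kappa$ — is already in place, so the proof should amount to citing the relevant consistency statement and reading off the spectrum.

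For (1), I would apply Corollary \ref{cor:product of two on omega} to produce, in a ccc extension, a uniform simple $P_{\lambda_0 \times \lambda_1}$-point ultrafilter $U$ on $\omega$. By Proposition \ref{prop: Spctrum of simple point} we then have $\spec_{dp}(U) = \spec_T(U) = \{\lambda_0,\lambda_1\}$, which is non-convex whenever some regular cardinal lies strictly between $\lambda_0$ and $\lambda_1$ (e.g.\ $\lambda_0 = \omega_1$ and $\lambda_1 = \omega_3$). For (2)(a), I invoke Corollary \ref{Cor: Simple P-point measurable} to get, from a supercompact cardinal and in a $<\!\kappa$-directed closed, $\kappa^+$-cc extension, a normal ultrafilter $U$ on $\kappa$ which is a simple $P_{\lambda_0 \times \cdots \times \lambda_n}$-point; another application of Proposition \ref{prop: Spctrum of simple point} yields $\spec_{dp}(U) = \spec_T(U) = \{\lambda_0,\ldots,\lambda_n\}$.

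For (2)(b), I would cite Corollary \ref{Cor: Clubs}, which in the same generic extension provides a base $\mathcal{C}$ for $\mathrm{Cub}_\kappa$ with $(\mathcal{C},\supseteq^*)\simeq \lambda_0\times\cdots\times \lambda_n$. Since Proposition \ref{prop: Spctrum of simple point} is stated for \emph{filters} rather than only ultrafilters, it applies verbatim to $\mathrm{Cub}_\kappa$ and gives $\spec_{dp}(\mathrm{Cub}_\kappa) = \spec_T(\mathrm{Cub}_\kappa) = \{\lambda_0,\ldots,\lambda_n\}$. One may also observe that parts (2)(a) and (2)(b) can be arranged simultaneously in the same extension, since the same forcing produces both a simple $P$-point normal ultrafilter and the corresponding base of closed sets for the club filter.

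There is essentially no obstacle at this stage: the proof is a bookkeeping step that assembles three earlier results. The only point worth flagging is to make explicit that the choice of the $\lambda_i$'s can be made so as to witness non-convexity — e.g.\ picking the $\lambda_i$'s so that some regular cardinal falls strictly between two consecutive entries — which ensures that the conclusion \emph{answers} the question from \cite{TomCohesive} rather than merely computing a (possibly convex) spectrum.
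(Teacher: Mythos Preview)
Your proposal is correct and matches the paper's own proof essentially verbatim: the paper simply cites Proposition~\ref{prop: Spctrum of simple point} together with Corollaries~\ref{cor:product of two on omega}, \ref{Cor: Simple P-point measurable}, and \ref{Cor: Clubs} for parts (1), (2a), and (2b) respectively. Your observation that Proposition~\ref{prop: Spctrum of simple point} is stated for filters (and hence applies to $\mathrm{Cub}_\kappa$) is exactly the point needed for (2b), and your remark about choosing the $\lambda_i$'s to witness non-convexity is a helpful gloss that the paper leaves implicit.
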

\begin{proof}
    Using Proposition \ref{prop: Spctrum of simple point}, $(1),(2a),(2b)$ follow from  Theorem \ref{thm: the omega simple p-point}, \ref{Cor: Simple P-point measurable}, and Corollary \ref{Cor: Clubs} respectively.
\end{proof}

Gitik \cite{GitikNew} notified us that he was able to obtain $(2b)$ independently for the depth spectrum of the club filter at a measurable cardinal from optimal assumptions.
Next, let us consider the Tukey-related notion of cohesiveness due to Kanamori \cite{Kanamori1978}. This notion is also known as Galvin's property \cite{bgp,TomGabe24,bgs,SatInCan}:
\begin{definition}\label{def:cohesive}
    An ultrafilter $U$ is {\em$(\mu,\lambda)$-cohesive} if for any $\{X_\alpha\mid \alpha<\lambda\}\in [U]^\lambda$ there is $I\in [\lambda]^\mu$ such that $\bigcap_{i\in I}X_i\in U$.
\end{definition}
Thus, $\lambda\in \spec_T(U)$ if and only if $U$ is \textit{not} $(\lambda,\lambda)$-cohesive (see \cite{TomCohesive}).
The ultrafilters constructed in this paper can also be used to separate the notion of $(\lambda_0,\lambda_1)$-cohesive from being $(\lambda_0,\lambda_0)$ and $(\lambda_1,\lambda_1)$-cohesive. It is easy to see that any non $(\lambda_0,\lambda_1)$-cohesive ultrafilter is not  $(\lambda_0,\lambda_0)$ and not $(\lambda_1,\lambda_1)$-cohesive. The converse is not true in general, as witnessed by our ultrafilters:
\begin{theorem}
    If $U$ is a simple  $P_{\lambda_0\times\lambda_1}$-point over $\kappa$ for regular cardinals $\kappa<\lambda_0<\lambda_1$, then $U$ is not $(\lambda_0,\lambda_0)$ and $(\lambda_1,\lambda_1)$-cohesive but it is $(\lambda_0,\lambda_1)$-cohesive.
\end{theorem}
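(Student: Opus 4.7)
\medskip

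\emph{Plan for the proof.} The first part, non-$(\lambda_i,\lambda_i)$-cohesiveness for $i=0,1$, is immediate: by Proposition \ref{prop: Spctrum of simple point} we have $\spec_T(U)=\{\lambda_0,\lambda_1\}$, and the equivalence ``$\lambda\in\spec_T(U)$ iff $U$ is not $(\lambda,\lambda)$-cohesive'' (recorded just above and due to \cite{TomCohesive}) closes this case.

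For the positive assertion that $U$ is $(\lambda_0,\lambda_1)$-cohesive, fix a generating set $\mathcal{B}=\{B_{(\alpha,\beta)}:(\alpha,\beta)\in\lambda_0\times\lambda_1\}$ witnessing that $U$ is a simple $P_{\lambda_0\times\lambda_1}$-point, and let $\{X_\alpha:\alpha<\lambda_1\}\subseteq U$ be arbitrary. For each $\alpha<\lambda_1$ choose $(f(\alpha),g(\alpha))\in\lambda_0\times\lambda_1$ with $B_{(f(\alpha),g(\alpha))}\subseteq^* X_\alpha$. Since $\lambda_1$ is regular and strictly larger than $\lambda_0$, the fibre $J:=\{\alpha<\lambda_1:f(\alpha)=\alpha^*\}$ has size $\lambda_1$ for some $\alpha^*<\lambda_0$. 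Pick any $I_0\subseteq J$ of size $\lambda_0$; since $|I_0|=\lambda_0<\cf(\lambda_1)$, the ordinal $\beta^*:=\sup_{\alpha\in I_0}g(\alpha)+1$ is strictly less than $\lambda_1$. Setting $B:=B_{(\alpha^*,\beta^*)}$, the monotonicity of the isomorphism gives $B\subseteq^* B_{(f(\alpha),g(\alpha))}\subseteq^* X_\alpha$ for every $\alpha\in I_0$.

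Now comes the key step. For each $\alpha\in I_0$ the ``error set'' $E_\alpha:=B\setminus X_\alpha$ is a bounded subset of $\kappa$ (finite when $\kappa=\omega$). The total number of bounded subsets of $\kappa$ is $2^{<\kappa}=\kappa$, trivially for $\kappa=\omega$ and because a measurable cardinal is inaccessible in the general case. Since $\lambda_0>\kappa$ is regular, applying pigeonhole to $\alpha\mapsto E_\alpha$ on $I_0$ yields $I\subseteq I_0$ with $|I|=\lambda_0$ and a single bounded set $E$ such that $E_\alpha=E$ for every $\alpha\in I$. Therefore $B\setminus E\subseteq X_\alpha$ for every $\alpha\in I$, so $\bigcap_{\alpha\in I}X_\alpha\supseteq B\setminus E\in U$ by $\kappa$-completeness and uniformity of $U$, witnessing the desired cohesiveness.

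The main obstacle is precisely that intersecting $\lambda_0$-many sets $X_\alpha\supseteq^* B$ need not stay in $U$: a union of $\lambda_0$ many $\kappa$-bounded sets can in principle fail to be bounded, so the naive ``lower bound in a $P_{\lambda_0}$-point'' argument does not immediately apply. What rescues the argument is the cardinal arithmetic $2^{<\kappa}=\kappa<\lambda_0$, which allows us to thin $I_0$ so that all the errors $E_\alpha$ coincide, thereby collapsing a $\lambda_0$-indexed union of bounded sets to a single bounded set.
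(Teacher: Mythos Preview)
Your proof is correct and follows essentially the same route as the paper: reduce non-$(\lambda_i,\lambda_i)$-cohesiveness to $\spec_T(U)=\{\lambda_0,\lambda_1\}$, and for $(\lambda_0,\lambda_1)$-cohesiveness stabilize the first coordinate $\alpha^*$, pick $\lambda_0$-many indices, bound the second coordinates by some $\beta^*<\lambda_1$, and then thin once more to control the error below $B_{(\alpha^*,\beta^*)}$.

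The only difference is in that last thinning step, and it is worth noting. You stabilize the entire error set $E_\alpha=B\setminus X_\alpha$, which requires the count $2^{<\kappa}=\kappa$ and hence the side hypothesis that $\kappa$ is $\omega$ or inaccessible. The paper instead records only the bound $\xi_\alpha<\kappa$ with $B\setminus\xi_\alpha\subseteq X_\alpha$ and pigeonholes on $\xi_\alpha$: since there are only $\kappa<\lambda_0$ possible values, a subset $J$ of size $\lambda_0$ shares a common $\xi$, and then $B\setminus\xi\subseteq\bigcap_{j\in J}X_j$. This avoids any appeal to $2^{<\kappa}=\kappa$ and works for arbitrary regular $\kappa$. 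Your argument is fine in the intended contexts, but the paper's version is both simpler and strictly more general.
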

\begin{proof}
    As mentioned in the paragraph following Definition \ref{def:cohesive}, not being $(\lambda,\lambda)$-cohesive is equivalent to $\lambda\in \spec_T(U)$. Hence the first part follows from Proposition \ref{prop: Spctrum of simple point}. For the second part, let us prove that $U$ is $(\lambda_0,\lambda_1)$-cohesive. Fix $\l X_i\mid i<\lambda_1\r\subseteq U$. We need to find $\lambda_0$-many sets whose intersection is in $U$. Let $\mathcal{B}=(B_{i,j})_{(i,j)\in\lambda_0\times\lambda_1}$ be a base for $U$ witnessing that $U$ is a simple $P_{\lambda_0\times\lambda_1}$-point. For each $i<\lambda_1$ there is $\beta_i<\lambda_1$ and $\alpha_i<\lambda_0$ such that $B_{\alpha_i,\beta_i}\subseteq^* X_i$. There is $I\in [\lambda_1]^{\lambda_1}$  and $\alpha^*<\lambda_0$ such that for every $i\in I$, $\alpha_i=\alpha^*$. Consider the first $\lambda_0$-many indices $\{i_\gamma\mid\gamma<\lambda_0\}\subseteq I$. Let $i^*=\sup_{\gamma<\lambda_0}i_\gamma+1<\lambda_1$. Then $B_{\alpha^*,i^*}\subseteq^* B_{\alpha^*,i_{\gamma}}\subseteq ^*X_{i_\gamma}$ for every $\gamma<\lambda_0$. Since $\lambda_0>\kappa$ is regular, we can find $J\subseteq \{i_\gamma\mid \gamma<\lambda_0\}$ still of size $\lambda_0$ and some $\xi<\kappa$ such that $B_{\alpha^*,i^*}\setminus \xi\subseteq \bigcap_{j\in J}X_j$, as wanted.
\end{proof}
As in the case of the linear iteration of the Mathias forcing, our model can exhibit a small ultrafilter number.
\begin{proposition}\label{prop: small ultrafilter number}
    Suppose that $\kappa$ is a supercompact cardinal and let $\mathbb{D}$ be a $\kappa^+$-directed well-founded set with $|\mathbb{D}|\leq\lambda$. Then there is a ($\kappa$-cc $\ast \ \lambda^+$-closed $\ast \ \kappa^+$-cc) generic extension with a simple $P_{\mathbb{D}}$-point ultrafilter and $2^\kappa=\lambda$. 
\end{proposition}
\begin{proof}
    Let $S^*_{\dot{U}}\subseteq\lambda^+$, $(B_{a})_{ a\in S^*_{\dot{U}}\times\mathbb{D}}$, and $U$ be as in Theorem \ref{msble}. Arguing as in the proof \ref{Cor: Simple P-point measurable}, we choose $ s^*\in S^*_{\dot{U}}$ a limit point of $S^*_{\dot{U}}$ with an appropriate cofinal subset included in $S^*_{\dot{U}}$. This time we make sure that $s^*\geq \lambda$. Let
     $$U^*=U\cap V[G_{s^*\times \mathbb{D}}]\in V[G_{s^*\times \mathbb{D}}]=V^*$$
    The ultrafilter $U^*$ is a simple $P_{\mathbb{D}}$-point. Also, since $s^*\geq\lambda$, in $V^*$, $2^\kappa=\lambda$, as wanted.   
\end{proof}
In the following theorem we use the well-known characterization of the generalized bounding and dominating numbers using the club filter (see for example \cite{TomGabe2025}):
\begin{proposition}
    Let $\kappa$ be a regular cardinal. Then
    $$\mathfrak{d}_\kappa=\min\{|\mathcal{B}|\mid \mathcal{B} \ {\subseteq^*}\text{-generates }\text{Cub}_{\kappa}\}$$
        $$\mathfrak{b}_\kappa=\min\{|\mathcal{B}|\mid \mathcal{B} \text{ is unbounded in }(\text{Cub}_{\kappa},\supseteq^*)\}$$
\end{proposition}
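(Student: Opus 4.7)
The plan is to transfer back and forth between functions in $\kappa^\kappa$ and clubs in $\kappa$ using two standard assignments, and then verify that each of the four inequalities (two for $\mathfrak d_\kappa$, two for $\mathfrak b_\kappa$) follows from the same basic translation lemma. To each $f\in\kappa^\kappa$ I associate the club of closure points $C_f=\{\alpha<\kappa : f[\alpha]\subseteq\alpha\}$, and to each club $C$ I associate its increasing enumeration $e_C\in\kappa^\kappa$. The two key facts I will record first are: (a) if $f\leq^* g$ then $C_g\subseteq^* C_f$, since every sufficiently large closure point of $g$ is a closure point of $f$; and (b) if $C$ is any club and $\gamma=e_C(\beta)$, then $\beta\leq\gamma$, and if $\gamma\in C_f$ for $\beta$ large enough then $f(\beta)<\gamma=e_C(\beta)$, so that $C\subseteq^* C_f$ forces $f\leq^* e_C$.

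For $\mathfrak d_\kappa\geq \min\{|\mathcal B|:\mathcal B\text{ generates }\mathrm{Cub}_\kappa\}$, I will take a $\leq^*$-dominating family $\{f_\alpha:\alpha<\mathfrak d_\kappa\}$ and show that $\{C_{f_\alpha}:\alpha<\mathfrak d_\kappa\}$ is $\subseteq^*$-cofinal in $\mathrm{Cub}_\kappa$: given any club $D$, let $g_D(\beta)=\min(D\setminus(\beta+1))$; pick $\alpha$ with $g_D\leq^* f_\alpha$; then $C_{f_\alpha}\subseteq^* C_{g_D}\subseteq D$, using that limits of elements of $D$ are in $D$. Conversely, given a generating family $\{C_\alpha:\alpha<\theta\}$, I will show the enumerations $\{e_{C_\alpha}\}$ form a dominating family: for $g\in\kappa^\kappa$ consider $D=C_g$, pick $\alpha$ with $C_\alpha\subseteq^* D$, and apply (b) above to conclude $g\leq^* e_{C_\alpha}$. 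This gives $\theta\geq \mathfrak d_\kappa$.

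The two $\mathfrak b_\kappa$-directions follow in the same spirit. Given an unbounded family $\{f_\alpha:\alpha<\mathfrak b_\kappa\}$ I will argue $\{C_{f_\alpha}\}$ is unbounded in $(\mathrm{Cub}_\kappa,\supseteq^*)$: if some club $D$ satisfied $D\subseteq^* C_{f_\alpha}$ for every $\alpha$, then fact (b) would give $f_\alpha\leq^* e_D$ for every $\alpha$, contradicting unboundedness. Conversely, if $\{C_\alpha:\alpha<\theta\}$ is $\supseteq^*$-unbounded, then $\{e_{C_\alpha}\}$ is $\leq^*$-unbounded in $\kappa^\kappa$: any $g$ bounding them all would yield, via the limit points of $C_g$ (which form a club), a common lower bound for the $C_\alpha$'s by the argument relating $e_{C_\alpha}\leq^* g$ to $\gamma\in C_\alpha$ for limit $\gamma\in C_g$ above the relevant threshold, contradicting unboundedness.

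The mildly delicate step is the last one, where one has to pass from $e_{C_\alpha}\leq^* g$ back to a containment of clubs modulo bounded; the subtlety is that successors of elements of $C_g$ need not lie in $C_\alpha$, but this is harmless after restricting to the club of limit points of $C_g$, since those are sups of unbounded subsets of $C_\alpha$ (as $e_{C_\alpha}(\beta)\geq\beta$) and $C_\alpha$ is closed. None of the estimates requires more than the regularity of $\kappa$, so the proof works uniformly for $\kappa=\omega$ and for measurable $\kappa$.
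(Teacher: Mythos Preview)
The paper does not actually prove this proposition; it is stated as a well-known fact with a reference to \cite{TomGabe2025}. Your proof is the standard argument and is essentially correct for uncountable regular $\kappa$: the two translations $f\mapsto C_f$ and $C\mapsto e_C$ together with facts (a) and (b) give all four inequalities, and you correctly identify that in the final step one must pass to limit ordinals in $C_g$ and use closedness of the $C_\alpha$. One small gap you glide over is that a $\subseteq^*$-generating or $\supseteq^*$-unbounded family $\mathcal B\subseteq\text{Cub}_\kappa$ need not consist of clubs; you should note explicitly that one may replace each $B\in\mathcal B$ by a club contained in it without increasing $|\mathcal B|$ and without destroying the generating/unbounded property, so that the hypothesis ``$C_\alpha$ is closed'' in your last paragraph is justified.

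Your final sentence, however, is wrong: the argument does \emph{not} work for $\kappa=\omega$. In $\omega$ there are no limit ordinals below $\omega$, so $C_f$ is typically finite (e.g.\ for $f(n)=n+1$ one has $C_f\subseteq\{0\}$), and in any case the ``club filter'' on $\omega$ is just the Fr\'echet filter, which is $\subseteq^*$-generated by a single infinite set, whereas $\mathfrak d\geq\aleph_1$. The proposition as used in the paper concerns uncountable (indeed measurable) $\kappa$, so this does not affect the application, but you should drop the claim about $\kappa=\omega$.
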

Note that being unbounded in $(\text{Cub}_\kappa,\supseteq^*)$ is equivalent to not having a pseudo-intersection (which is just unbounded in $\kappa$).
\begin{theorem}
    Relative to a supercompact cardinal, it is consistent that $\kappa$ is a supercompact cardinal and $\mathfrak{b}_\kappa<\mathfrak{d}_\kappa=\mathfrak{u}_\kappa<2^\kappa$
\end{theorem}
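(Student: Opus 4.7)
The plan is to specialize Proposition~\ref{prop: small ultrafilter number} to the case $n=1$ with parameters $\lambda_0 = \kappa^+$, $\lambda_1 = \kappa^{++}$ and $\lambda_{n+1} = \lambda_2 = \kappa^{+++}$. Starting from a Laver-prepared supercompact $\kappa$, the proposition delivers a $\kappa$-directed-closed, $\kappa^+$-cc generic extension $V^*$ in which $\kappa$ remains supercompact, $2^\kappa = \kappa^{+++}$, and there exists a simple $P_{\kappa^+ \times \kappa^{++}}$-point normal ultrafilter $U$ on $\kappa$ with a $\supseteq^*$-generating family of order type $\kappa^+ \times \kappa^{++}$. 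Because $U$ is normal, the argument of Corollary~\ref{Cor: Clubs} (closing that generating family under the topological closure operation on $\kappa$) yields inside $V^*$ a base $\mathcal{C}$ for $\text{Cub}_\kappa^{V^*}$ with $(\mathcal{C},\supseteq^*)\simeq \kappa^+ \times \kappa^{++}$.

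With this base in hand, $\mathfrak{b}_\kappa$ and $\mathfrak{d}_\kappa$ can be read off from the lattice $\kappa^+ \times \kappa^{++}$ via the characterization just preceding the theorem. The minimum size of an unbounded subset of $\kappa^+\times\kappa^{++}$ equals $\kappa^+$: the chain $\{(\alpha,0)\mid \alpha<\kappa^+\}$ is unbounded in the first coordinate, while any family of size $\leq \kappa$ has both projections bounded and thus is $\supseteq^*$-bounded (which also reflects the $\kappa$-completeness of $\text{Cub}_\kappa$). Hence $\mathfrak{b}_\kappa = \kappa^+$. Dually, $\mathrm{cf}(\kappa^+\times \kappa^{++}) = \kappa^{++}$, yielding $\mathfrak{d}_\kappa = \kappa^{++}$.

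For the ultrafilter number, the base of $U$ witnesses $\mathfrak{u}_\kappa \leq \kappa^{++}$. The reverse inequality $\mathfrak{u}_\kappa \geq \mathfrak{d}_\kappa = \kappa^{++}$ follows from the generalized Solomon-type theorem $\mathfrak{d}_\kappa \leq \mathfrak{u}_\kappa$: given any $\supseteq^*$-generating family $\mathcal{B}'$ of a uniform ultrafilter on $\kappa$, mapping each $B \in \mathcal{B}'$ to its closure in $\kappa$ (together with the standard reduction handling uniform ultrafilters that fail to extend $\text{Cub}_\kappa$) produces a $\supseteq^*$-cofinal family in $\text{Cub}_\kappa$ of size $\leq |\mathcal{B}'|$. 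Combining these bounds,
\[
\mathfrak{b}_\kappa = \kappa^+ < \kappa^{++} = \mathfrak{d}_\kappa = \mathfrak{u}_\kappa < \kappa^{+++} = 2^\kappa,
\]
proving the theorem.

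The main obstacle is the lower bound $\mathfrak{u}_\kappa \geq \mathfrak{d}_\kappa$ in full generality: for $\kappa$-complete uniform ultrafilters that extend $\text{Cub}_\kappa$ the closure argument is immediate, but ruling out pathologically small bases for uniform ultrafilters failing to be $\kappa$-complete (or failing to refine every club) requires the full Solomon-style argument producing club-dominating families from a uniform ultrafilter base. A secondary, routine bookkeeping check is that the base produced by Proposition~\ref{prop: small ultrafilter number} has order type exactly $\kappa^+ \times \kappa^{++}$, so that its closures produce a base for $\text{Cub}_\kappa$ of the same order type; this is essentially the content of Claim~\ref{Claim:rightStructureMeasurable} inside the proof of Theorem~\ref{msble}.
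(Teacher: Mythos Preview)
Your proposal is correct and follows essentially the same route as the paper: invoke Proposition~\ref{prop: small ultrafilter number} and Corollary~\ref{Cor: Clubs} to obtain a model where both $U$ and $\text{Cub}_\kappa$ are simple $P_{\lambda_0\times\lambda_1}$-points, then read off $\mathfrak{b}_\kappa$, $\mathfrak{d}_\kappa$, $\mathfrak{u}_\kappa$ from the cofinal structure together with the ZFC inequality $\mathfrak{d}_\kappa\leq\mathfrak{u}_\kappa\leq\mathfrak{u}^{com}_\kappa$. Your specialization to $\lambda_i=\kappa^{+(i+1)}$ is harmless but unnecessary (the paper leaves $\lambda_0<\lambda_1<\lambda_2$ arbitrary), and your flagged ``main obstacle'' $\mathfrak{d}_\kappa\leq\mathfrak{u}_\kappa$ is not reproved in the paper either---it is simply cited from \cite{TomGabe2025}.
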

\begin{proof}
    Let $V^*$ be the model of Proposition \ref{prop: small ultrafilter number} where $2^\kappa=\lambda_2>\lambda_1>\lambda_0$ and there is a simple $P_{\lambda_0\times\lambda_1}$-point over $\kappa$. By Corollary \ref{Cor: Clubs} the club filter is a simple $P_{\lambda_0\times\lambda_1}$-point and in particular generated by $\lambda_1$-many sets. By the previous proposition, in $V^*$, $\mathfrak{d}_\kappa=\lambda_1$. Also, any simple $P_{\lambda_0\times\lambda_1}$-point has an unbounded family of size $\lambda_0$ and therefore by the previous proposition $\mathfrak{b}_\kappa=\lambda_0$.  In ZFC, for uncountable cardinals  we have the following inequalities (see for example \cite{TomGabe2025}): $$\mathfrak{d}_\kappa\leq \mathfrak{u}_\kappa\leq \mathfrak{u}^{com}_\kappa.$$ Since $U^*$ is also  generated by $\lambda_1$-many sets, we have $\mathfrak{u}^{com}_\kappa\leq\lambda_1$ and we get that  $\mathfrak{d}_\kappa=\mathfrak{u}_\kappa=\mathfrak{u}^{com}_{\kappa}=\lambda_1$. 
\end{proof} 
Let us conclude this paper with two remarks:
\begin{remark}
     The above model is essentially different from the linear Mathias iteration in the sense that a $P_\lambda$-point cannot exist in those models: indeed, if there is a simple $P_\lambda$-point then $\mathfrak{b}_\kappa=\mathfrak{d}_\kappa$ (see \cite{TomGabe2025}), which is not going to hold here.
\end{remark}
\begin{remark}
    The existence of a simple $P_{\mathbb{D}}$-point ultrafilter $U$ poses restrictions on cardinal characteristics. For example, by \cite{TomGabe2025} on a measurable cardinal $\kappa$, $\min(\spec_T(\mathbb{D}))=\min(\spec_T(U))\leq\mathfrak{b}_\kappa$ and $\mathfrak{ch}(\mathbb{D})=\mathfrak{ch}(U)\geq \mathfrak{d}_\kappa$. However, it is unclear if this equality must hold or if there are any other limitations (See Questions \ref{question: limitations}-\ref{question: limitations3}). 
\end{remark}

\section{Problems}
\begin{question}
    What kind of ill-founded directed sets $\mathbb{D}$ can be realized as simple $P_{\mathbb{D}}$-point ultrafilters?
\end{question}
\begin{question}
Is there a method to realize a poset $\mathbb{D}$ as a simple $P_{\mathbb{D}}$-point ultrafilter for $\mathbb{D}$'s which are not countably-directed? 
\end{question}
Some limitations must be placed as by Corollary \ref{cor: limitation} no simple $P_{\omega\times \omega_1}$-point exists. But what are the exact limitations?

In this paper, we focused on $\subseteq^*$. Still, one might also be interested in the order $\supseteq$ on an ultrafilter $U$, especially when considering the Tukey-type of $(U,\supseteq)$. 
\begin{question}
    What are the possible isomorphism types of $(\mathcal{B},\supseteq)$, where $\mathcal{B}$ is a $\subseteq$-generating set of an ultrafilter?
\end{question}
\begin{question}
    Is it consistent that $\mathfrak{d}_\kappa<\mathfrak{u}_\kappa<2^\kappa$ for a measurable cardinal $\kappa$?
\end{question}
Finally, we would like to ask about the possible limitations that the existence of a simple $P_{\mathbb{D}}$-point poses. 
\begin{question}\label{question: limitations}
    Let $\kappa$ be a measurable cardinal. Can there be $\kappa$-complete simple $P_{\mathbb{D}_0}$-point and a simple $P_{\mathbb{D}_1}$-point ultrafilters on $\kappa$ for two posets $\mathbb{D}_0,\mathbb{D}_1$ such that $\mathbb{D}_0\not\equiv_T\mathbb{D}_1$?
\end{question}
For linear orders $\mathbb{D}_0,\mathbb{D}_1$ this was proven to be impossible in \cite{TomGabe2025}.
\begin{question}\label{question: limitations2}
    Let $\kappa$ be a measurable cardinal and suppose that there is a $\kappa$-complete simple $P_{\mathbb{D}}$-point ultrafilter on $\kappa$. Is any of the following statements provable:
    \begin{enumerate}
        \item $\mathfrak{b}_\kappa=\min(\spec_T(\mathbb{D}))$.
        \item $\mathfrak{d}_\kappa=\mathfrak{d}(\mathbb{D})$. Here $\mathfrak{d}(\mathbb{D})$ denotes the minimal size of a cofinal subset of $\mathbb{D}$.
        \item $\spec_T(\mathbb{D})=\spec_T(\text{Cub}_\kappa)$.
    \end{enumerate}
\end{question}
Answering the question for a specific non-linear $\mathbb{D}$ (e.g. $\mathbb{D}=\lambda_0\times\lambda_1$) would also be of interest.

On $\omega$, \cite{MildenBraun} showed that it is consistent to have a simple $P_{\aleph_1}$-point and a simple $P_{\aleph_2}$-point. Nyikos \cite{Nyikos2020-NYISUA} proved that if there is a simple $P_\lambda$-point on $\omega$ then either $\lambda=\mathfrak{b}$ or $\lambda=\mathfrak{d}$. The following questions relate to generalizations of these facts:
\begin{question}\label{question: limitations3}
    Can there be $\mathbb{D}_0,\mathbb{D}_1,\mathbb{D}_2$ non-Tukey equivalent such that there are simple $P_{\mathbb{D}_i}$-point ultrafilters on $\omega$ for $i=0,1,2$?
\end{question}

\bibliographystyle{amsplain}
\bibliography{ref1}

\end{document}